\numberwithin{equation}{section}
\theoremstyle{plain}
\newtheorem{theorem}{Theorem}[section]
\newtheorem{lemma}[theorem]{Lemma}
\newtheorem{corollary}[theorem]{Corollary}
\newtheorem{proposition}[theorem]{Proposition}
\theoremstyle{definition}
\newtheorem{Def}[theorem]{Definition}
\newtheorem{example}[theorem]{Example}
\newtheorem{remark}[theorem]{Remark}
\newtheorem{?}[theorem]{Problem}
\newtheoremstyle{named}{}{}{\itshape}{}{\bfseries}{.}{.5em}{#1\thmnote{ #3}}
\theoremstyle{named}
\newcommand{\f}[1]{\ifthenelse{\equal{#1}{1}}{(q;q)_\infty}{(q^{#1};q^{#1})_{\infty}}}
\def\cD{\mathcal{D}}
\def\cP{\mathcal{P}}
\def\cA{\mathcal{A}}
\def\cB{\mathcal{B}}
\def\cC{\mathcal{C}}
\def\ocP{\overline{\mathcal{P}}}
\def\ri{\rightarrow}
\def\la{\lambda}
\def\op{\overline{p}}
\def\os{\overline{s}}
\def\og{\overline{g}}
\begin{document}
\title[The combinatorics of identities involving overpartitions with distinct parts]{The combinatorics of identities involving overpartitions with distinct parts}

\author[H. Li]{Haijun Li}
\address[Haijun Li]{College of Mathematics and Statistics, Chongqing University, Chongqing 401331, P.R. China}
\email{lihaijun@cqu.edu.cn}

\date{\today}

\begin{abstract}
Recently, Andrews and EI Bachraoui discovered several companions for some famous $q$-series formulas, and derived some new identities involving partitions and overpartitions with distinct parts. In this paper, we shall refine their results by the number of parts of partitions and furthermore, we will also provide the combinatorial proofs for those partition identities.
\end{abstract}

\keywords{Overpartition, partition identity, bijective combinatorics.
\newline \indent 2020 {\it Mathematics Subject Classification}. 05A15, 05A17, 05A19.}

\maketitle

\section{Introduction}\label{sec:intro}

We begin with the notation of q-series. Let $q$ denote a complex number with $|q|<1$. Here and in what follows, we adopt the standard $q$-series notation \cite{GR90}. We let
\begin{align*}
&(a; q)_n=(1-a)(1-aq)\cdots (1-aq^{n-1})\text{ for }n\geq 1,\ (a; q)_0=1,\\
&(a; q)_{\infty}=\lim_{n\ri\infty}(a; q)_n,\text{ and }(a_1, ..., a_m; q)_n=(a_1; q)_n\cdots (a_m; q)_n.
\end{align*}

Now we introduce some necessary knowledge for the integer partition. A {\it partition} $\la$ of a positive integer $n$ is defined as a non-increasing sequence of positive integers $(\la_1, \la_2, ..., \la_{\ell})$, such that $\la_1+\la_2+\cdots +\la_{\ell}=n$. Write the weight $|\la|=n$, and the terms $\la_i$ are called the {\it parts} of $\la$, the number of parts of $\la$ is called the {\it length} of $\la$, denoted $\ell(\la)$. For more details about integer partitions see \cite{andtp}. Moreover, an overpartition \cite{CL04} of $n$ is a partition of $n$ where the first occurrence of each part may be overlined. The number of overpartitions of $n$, written $\op(n)$, has the following generating function
\begin{align*}
\sum_{n\geq 0}\op(n)q^n=\frac{(-q; q)_{\infty}}{(q; q)_{\infty}}.
\end{align*}
Note that overlined parts in overpartitions are distinct by definition. Then we say that an overpartition has distinct parts if its non-overlined parts are also distinct. So if we say that a non-overlined part $\la_k$ is equal to an overlined part $\la_m$, then only the numbers without the overlines are equal. Now let $\op_d(n)$ be the number of overpartitions of $n$ into distinct parts. For example, $\op_d(4)=9$ enumerates
\begin{align*}
4, \overline{4}, 3+1, \overline{3}+1, 3+\overline{1}, \overline{3}+\overline{1}, \overline{2}+2, 2+\overline{1}+1, \overline{2}+\overline{1}+1.
\end{align*}

Recently, Andrews and EI Bachraoui discovered several companions for some famous $q$-series formulas, and derived some new identities involving partitions and overpartitions. The main aim of this paper is to offer the bijective proofs for the following identities appearing in Andrews and EI Bachraoui's paper~\cite{AB25}. Firstly, let $\os(\pi)$ (resp. $\og(\pi)$) denote the smallest (resp. greatest) overlined part of the overpartition $\pi$.

\begin{theorem}[{cf. \cite[Def. 1 and Thm. 1]{AB25}}]\label{thm:1}
For any positive integer $n$, let $A(n)$ denote the number of overpartitions $\pi$ of $n$ into distinct parts with smallest overlined part such that the remaining overlined parts are even and $>2\os(\pi)$ and the non-overlined parts are odd and $<2\os(\pi)-1$. Furthermore, let $A_0(n)$ (resp. $A_1(n)$) denote the number of overpartitions counted by $A(n)$ wherein the number of parts is even (resp. odd) and let
\begin{align*}
A'(n)=A_1(n)-A_0(n).
\end{align*}
Then we have
\begin{align*}
(a)\quad &\sum_{n\geq 1}A(n)q^n:=\sum_{n\geq 1}q^n(-q^{2n+2}; q^2)_{\infty}(-q; q^2)_{n-1}=\frac{q}{1-q}(-q^2; q^2)_{\infty},\\
(b)\quad &\sum_{n\geq 1}A'(n)q^n:=\sum_{n\geq 1}q^n(q^{2n+2}; q^2)_{\infty}(q; q^2)_{n-1}=\frac{q}{1-q}(q^2; q^2)_{\infty}.
\end{align*}
\end{theorem}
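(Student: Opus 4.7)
My approach is to separate the two equalities within each of (a) and (b): the leftmost equality in each comes by direct combinatorial reading of the summand, while the rightmost equality is obtained from a single length-refined bijection whose signed reading yields (b) from (a).

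First, the leftmost equalities in (a) and (b) follow by unpacking the summand $q^n(-q^{2n+2};q^2)_\infty(-q;q^2)_{n-1}$. For fixed $n\ge 1$, the factor $q^n$ records the smallest overlined part $\bar n$; the product $(-q^{2n+2};q^2)_\infty=\prod_{k\ge 1}(1+q^{2n+2k})$ is the generating function for subsets of $\{2n+2,2n+4,\dots\}$, recording the additional even overlined parts $>2n$; and $(-q;q^2)_{n-1}=\prod_{r=1}^{n-1}(1+q^{2r-1})$ is that for subsets of $\{1,3,\dots,2n-3\}$, recording the non-overlined odd parts $<2n-1$. Summing over $n\ge 1$ produces $\sum_n A(n)q^n$. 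For (b), replacing $(+)$ by $(-)$ in both products inserts a factor of $-1$ for each chosen part, so $\pi$ counted by $A(n)$ is weighted by $(-1)^{\ell(\pi)-1}$, yielding $A_1(n)-A_0(n)=A'(n)$.

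Second, I construct a length-refined bijection $\phi:\pi\mapsto(m,\mu)$ between overpartitions counted by $A$ and pairs $(m,\mu)$ with $m\ge 1$ and $\mu$ a partition into distinct positive even parts, satisfying $|\pi|=m+|\mu|$ and $\ell(\mu)=\ell(\pi)-1$. Since $\frac{q(-q^2;q^2)_\infty}{1-q}$ generates exactly such pairs, this yields the right-hand equality in (a). Given $\pi$ with smallest overlined part $\bar n$, other overlined parts $\bar E_1<\cdots<\bar E_j$, and non-overlined parts $o_1>\cdots>o_k$, I initialize $m=n$, $\mu=\varnothing$ and then: (i) for each $o_\ell$, insert $2o_\ell$ into $\mu$ (a part congruent to $2\pmod 4$) and subtract $o_\ell$ from $m$; (ii) for each $E_i$, insert $E_i$ directly if $E_i\equiv 0\pmod 4$, else insert $E_i-2$ (now $\equiv 0\pmod 4$) and add $2$ to $m$; (iii) while $m\le 0$, peel $2$ off the largest part of $\mu$ and add $2$ to $m$. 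The mod-$4$ bookkeeping is the key invariant: parts of $\mu$ congruent to $2\pmod 4$ are exactly the images of the $o_\ell$'s (with $o_\ell=\mu_i/2$), while parts congruent to $0\pmod 4$ come from the $E_i$'s (possibly shifted). The inverse of $\phi$ reads off the $o_\ell$'s from the $\equiv 2\pmod 4$ parts and deduces $n$ and the number of shifts from $m=n-\sum o_\ell+2s$, using the canonical convention that every $\equiv 0\pmod 4$ part is $E$-derived whenever the induced triple $(n,\{E_i\},\{o_\ell\})$ is consistent with $E_i>2n$ and $o_\ell<2n-1$.

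Finally, part (b) is immediate: since $\phi$ preserves length in the form $\ell(\mu)=\ell(\pi)-1$, the signed count $A'(n)=\sum_\pi(-1)^{\ell(\pi)-1}$ transforms into $\sum_{(m,\mu)}(-1)^{\ell(\mu)}$, which is the coefficient of $q^n$ in $\frac{q(q^2;q^2)_\infty}{1-q}$. The main obstacle is to verify that $\phi$ is genuinely a bijection: the forward protocol must always terminate in a valid $(m,\mu)$ with distinct parts and $m\ge 1$, and the inverse, under the canonical convention above, must produce a unique preimage even in edge cases (for instance, when a shifted $E_i$ would collide with an $o$-derived part, or when cascading peels are needed to make $m\ge 1$). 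I expect these issues to be resolved by induction on $\ell(\pi)$, tracking the mod-$4$ parities of the parts inserted into $\mu$.
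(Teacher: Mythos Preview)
Your reading of the leftmost equalities in (a) and (b) is correct and matches the paper, and the idea of deriving (b) from (a) by tracking $\ell(\pi)-1$ through a length-preserving bijection is sound in principle. The problem is the map $\phi$ itself: as described it is not well-defined, and the defects are structural rather than edge cases an induction on $\ell(\pi)$ will smooth over.

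Two concrete failures. First, shifted and unshifted $E$-derived parts can collide. Take $\pi=\overline{6}+\overline{4}+\overline{1}$ (so $n=1$): here $E_1=4\equiv 0\pmod 4$ is inserted into $\mu$ as $4$, while $E_2=6\equiv 2\pmod 4$ is inserted as $6-2=4$, producing a repeated part. Second, the peeling step both destroys your mod-$4$ invariant and can itself create collisions. For $\pi=7+\overline{5}+3+1$ step (i) gives $\mu=\{14,6,2\}$ and $m=5-11=-6$; four successive peels of the largest part carry $14$ down to $6$, which is already present, and each peel flips that part's residue class mod $4$. Once peeling has occurred the rule ``$\equiv 2\pmod 4$ means $o$-derived'' is false, so the inverse cannot recover the $o_\ell$. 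Even without peeling the inverse is underdetermined: from the $\equiv 0\pmod 4$ parts you cannot tell which $E_i$ were shifted, so $m=n-\sum o_\ell+2s$ has two unknowns $n,s$ and no further equation to separate them.

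The paper takes a different route. It proves the one-parameter refinement
\[
\sum_{n\ge 1}q^n(-xq^{2n+2};q^2)_\infty(-xq;q^2)_{n-1}=\frac{q}{1-q}(-xq^2;q^2)_\infty
\]
analytically via the $q$-Gauss summation, and both (a) and (b) follow at $x=\pm 1$. For a bijective argument it does not attempt a direct map to pairs $(m,\mu)$; instead it proves the recurrence $A(n,m)=A(n-1,m)+p_{ed}(n-1,m)$ by a map that decrements $\os(\pi)$ by $1$ and, when this would violate the constraint (a non-overlined part equal to $2\os(\pi)-3$), converts a maximal run of consecutive odd non-overlined parts into consecutive even overlined parts while further decrementing $\os(\pi)$. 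That local move is invertible on the nose, and telescoping the recurrence yields the closed form. If you want a direct bijection to $(m,\mu)$, build it by iterating this decrement map rather than by a mod-$4$ residue scheme.
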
  

\begin{corollary}[{cf. \cite[Cor. 1]{AB25}}]\label{cor:1}
For any positive integer $n$, let $p_{ed}(n)$ be the number of partitions of $n$ into even not repeating parts and $p'_{ed}(n)$ be the number of partitions of $n$ counted by $p_{ed}(n)$ with an even number of parts minus those with an odd number of parts. Then we have
\begin{align*}
(a)\quad &A(n)-A(n-1)=p_{ed}(n-1),\\
(b)\quad &A'(n)-A'(n-1)=p'_{ed}(n-1).
\end{align*}
\end{corollary}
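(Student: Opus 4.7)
The plan is to read both parts of the corollary directly off from Theorem~\ref{thm:1} by multiplying each generating function identity by $(1-q)$. For part~(a), the identity
\[
\sum_{n \geq 1} A(n) q^n = \frac{q}{1-q}(-q^2;q^2)_\infty
\]
becomes, upon multiplication by $(1-q)$,
\[
\sum_{n \geq 1} \bigl(A(n) - A(n-1)\bigr) q^n \;=\; q(-q^2;q^2)_\infty \;=\; \sum_{n \geq 1} p_{ed}(n-1)\,q^n,
\]
where the left-hand side uses the convention $A(0) = 0$ (legitimate since every overpartition counted by $A$ contains at least one overlined part, forcing positive weight) and the right-hand side uses $(-q^2;q^2)_\infty = \sum_{k \geq 0} p_{ed}(k)\, q^k$. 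Comparing coefficients of $q^n$ yields (a). For (b), the identical manipulation applied to Theorem~\ref{thm:1}(b), combined with $(q^2;q^2)_\infty = \sum_{k \geq 0} p'_{ed}(k)\, q^k$, gives the analogous identity.

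In keeping with the combinatorial intent of the paper, I would also recast this as a bijective statement. Any bijection realizing Theorem~\ref{thm:1}(a) identifies an $A$-overpartition $\pi$ of $n$ with a pair $(k_\pi, \mu_\pi)$, where $k_\pi \geq 1$ corresponds to the factor $\frac{q}{1-q}$ and $\mu_\pi$ is a partition into distinct even parts with $k_\pi + |\mu_\pi| = n$. The shift $(k,\mu) \mapsto (k+1,\mu)$ is then an injection from the $A$-overpartitions of $n-1$ into those of $n$, whose cokernel is exactly the set of pairs $(1,\mu)$ with $|\mu| = n-1$; this cokernel is in natural bijection with partitions of $n-1$ into distinct even parts, which proves (a) combinatorially. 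For (b), the same shift argument carries through on the signed identity and yields the claim, provided the bijection preserves the parity of the number of parts in a controlled way.

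The main obstacle is exactly this parity bookkeeping in the bijective version of (b): since Theorem~\ref{thm:1}(b) is a signed statement, the bijection underlying it must transport $\ell(\pi)\pmod 2$ to $\ell(\mu_\pi)\pmod 2$, possibly up to a global shift coming from the $k_\pi$-piece, and this must be verified explicitly. If one is content with the purely algebraic route, (b) follows in two lines; the combinatorial version requires cleaner control of how parity is exchanged between the overpartition side and the distinct-even-parts side. I would first write down the $(1-q)$-multiplication proof as the main argument, and append the bijective interpretation as a remark referring back to the bijection used for Theorem~\ref{thm:1}.
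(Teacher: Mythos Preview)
Your analytic argument---multiply the generating function identities of Theorem~\ref{thm:1} by $(1-q)$ and compare coefficients---is correct and is essentially the same manipulation the paper uses, except that the paper first proves the $x$-refined identity (Theorem~\ref{thm:res1}) and then carries out the same $(1-q)$ step to obtain the refined recursion $A(n,m)-A(n-1,m)=p_{ed}(n-1,m)$ (Theorem~\ref{thm:res2}); both parts~(a) and~(b) of Corollary~\ref{cor:1} then fall out by summing over $m$ and by alternating in $m$, respectively. The refinement is what buys the paper a clean simultaneous proof of (a) and (b), whereas you treat them as two parallel $(1-q)$ computations using the two separate formulas in Theorem~\ref{thm:1}.

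Where your proposal diverges more substantially is the bijective remark. You assume an existing bijection realising Theorem~\ref{thm:1}(a), sending $\pi$ to a pair $(k_\pi,\mu_\pi)$, and then shift $k\mapsto k+1$. The paper does not construct such a bijection; instead it builds an explicit map $\varphi:\mathcal{A}(n,m)\to\mathcal{A}(n-1,m)\cup\mathcal{P}_{ed}(n-1,m)$ directly on overpartitions (Lemma~\ref{lem:main1}), by decreasing the smallest overlined part and then repairing any violation of the size constraints. Because $\varphi$ preserves $m$ (the number of parts other than the smallest overlined one), the parity issue you flag for part~(b) is handled automatically---no separate bookkeeping is needed. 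Your shift idea would work once a parity-controlled bijection for Theorem~\ref{thm:1}(a) is in hand, but as written it is a promissory note rather than a proof; the paper's $\varphi$ is the concrete object that redeems it.
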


\begin{theorem}[{cf. \cite[Def. 2 and Thm. 2]{AB25}}]\label{thm:2}
For any positive integer $n$, let $B(n)$ denote the number of overpartitions $\pi$ of $n$ into distinct parts with smallest overlined part such that the remaining overlined parts are odd and $>2\os(\pi)+1$ and the non-overlined parts are even and $<2\os(\pi)$. Furthermore, let $B_0(n)$ (resp. $B_1(n)$) denote the number of overpartitions counted by $B(n)$ wherein the number of parts is even (resp. odd) and let
\begin{align*}
B'(n)=B_1(n)-B_0(n).
\end{align*}
Then we have
\begin{align*}
(a)\quad &\sum_{n\geq 1}B(n)q^n:=\sum_{n\geq 1}q^n(-q^{2n+3}; q^2)_{\infty}(-q^2; q^2)_{n-1}=\frac{q}{1-q^2}(-q; q^2)_{\infty},\\
(b)\quad &\sum_{n\geq 1}B'(n)q^n:=\sum_{n\geq 1}q^n(q^{2n+3}; q^2)_{\infty}(q^2; q^2)_{n-1}=\frac{q}{1-q}(q^3; q^2)_{\infty}.
\end{align*}
\end{theorem}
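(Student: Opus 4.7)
The plan is to give a single sign-preserving bijection proving both (a) and (b) at once. First, I would rewrite the right-hand sides into combinatorially transparent form: using $(-q;q^2)_\infty=(1+q)(-q^3;q^2)_\infty$ and $1-q^2=(1-q)(1+q)$, the RHS of (a) becomes $\frac{q(-q^3;q^2)_\infty}{1-q}$, while the RHS of (b) is already $\frac{q(q^3;q^2)_\infty}{1-q}$. Each is the generating function for pairs $(m,\nu)$, where $m\geq 1$ is any positive integer and $\nu$ is a partition into distinct odd parts $\geq 3$, with weight $q^{m+|\nu|}$ and sign $+1$ in (a), $(-1)^{\ell(\nu)}$ in (b). On the left, expanding the Pochhammer factors of (b) attaches to each $\pi\in B(n)$ the sign $(-1)^{r+t}$, where $r$ is the number of overlined parts of $\pi$ other than $\os(\pi)$ and $t$ is the number of non-overlined parts; since $\ell(\pi)=1+r+t$, this coincides with $(-1)^{\ell(\pi)+1}$, realizing $B'(n)$ exactly.

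For $\pi\in B(n)$ with $\os(\pi)=s$, remaining overlined parts $o_1<\cdots<o_r$ (odd, each $\geq 2s+3$), and non-overlined parts $2m_1<\cdots<2m_t$ (with $m_i\in\{1,\dots,s-1\}$), the map I would use is
\[
\phi(\pi)=(m,\nu),\qquad m=s-t,\qquad \nu=\{2m_1+1,\ldots,2m_t+1\}\cup\{o_1,\ldots,o_r\}.
\]
Since $2m_i+1\in\{3,5,\ldots,2s-1\}$ and $o_j\geq 2s+3$, the set $\nu$ is automatically a partition into distinct odd parts $\geq 3$ with $2s+1\notin\nu$; elementary checks yield $m\geq 1$, $m+|\nu|=n$, and $\ell(\nu)=r+t$, so $\phi$ preserves both weight and sign. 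Claims (a) and (b) therefore both reduce to proving that $\phi$ is a bijection.

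For the inverse, given $(m,\nu)$ I would set $f(s)=\#\{i:\nu_i\leq 2s-1\}$ and $g(s)=s-f(s)$. The difference $g(s+1)-g(s)$ lies in $\{0,1\}$ and equals $0$ precisely when $2s+1\in\nu$; thus $g(1)=1$ and $g$ rises by unit steps to $\infty$, whence $s^*:=\max\{s:g(s)=m\}$ is well-defined and automatically satisfies $2s^*+1\notin\nu$. One then recovers $\pi$ by taking $s^*$ as the smallest overlined part, the parts $\nu_i\geq 2s^*+3$ as the remaining (odd) overlined parts, and $\nu_i-1$ for each $\nu_i\leq 2s^*-1$ as the (even) non-overlined parts. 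The main obstacle is this last step: verifying that $s^*$ is the \emph{unique} admissible choice of $s$, which hinges on the interplay between the monotonicity of $g$ and the forced absence of $2s+1$ from $\nu$. Once this uniqueness is settled, $\phi$ and its inverse are mutually inverse, and both parts of Theorem~\ref{thm:2} follow simultaneously.
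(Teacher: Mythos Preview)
Your argument is correct; the uniqueness of $s^*$ that you flag as the main obstacle is immediate once one notes that any admissible $s$ (with $g(s)=m$ and $2s+1\notin\nu$) forces $g(s+1)=m+1$, so by monotonicity of $g$ no larger $s'$ can satisfy $g(s')=m$. The route, however, differs from the paper's. The paper deduces Theorem~\ref{thm:2} from the $x$-refinement Theorem~\ref{thm:res3}, which it proves analytically by specializing the $q$-Gauss sum~\eqref{id:qGauss} and then setting $x=\pm1$; its bijective work (Lemma~\ref{lem:main2}) is aimed instead at the recurrence $B(n,m)-B(n-1,m)=p_{od>1}(n-1,m)$, via a map $\psi$ that lowers $\os(\pi)$ by one and converts a maximal consecutive run of top non-overlined even parts into overlined odd ones. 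Your $\phi$ is essentially the telescoping of this process: iterating $\psi$ until one lands among the partitions into distinct odd parts $>1$ takes exactly $m=s-t$ steps and outputs precisely your $\nu$. The paper's approach buys an explicit local recurrence with small, easily checked steps; yours buys a single sign-preserving bijection that handles (a) and (b) simultaneously, and since $\ell(\nu)=r+t$ equals the number of parts of $\pi$ other than $\os(\pi)$, your map in fact already delivers the full $x$-refinement of Theorem~\ref{thm:res3} combinatorially, which the paper obtains only by $q$-series manipulation.
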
  

\begin{corollary}[{cf. \cite[Cor. 2]{AB25}}]\label{cor:2}
For any positive integer $n>1$, let $p_{od}(n)$ be the number of partitions of $n$ into odd distinct parts and $p_{od>1}(n)$ be the number of partitions of $n$ into odd distinct parts $>1$. Then let $p'_{od>1}(n)$ be the number of partitions of $n$ counted by $p_{od>1}(n)$ whose number of parts is even minus the number of partitions counted by $p_{od>1}(n)$ wherein the number of parts is odd. Then we have
\begin{align*}
(a)\quad &B(n)-B(n-2)=p_{od}(n-1),\\
(b)\quad &B'(n)-B'(n-1)=p'_{od>1}(n-1).
\end{align*}
\end{corollary}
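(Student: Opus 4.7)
The plan is to derive both parts of the corollary as direct algebraic consequences of the generating function identities already established in Theorem \ref{thm:2}. The key observation is that the differences $B(n) - B(n-2)$ and $B'(n) - B'(n-1)$ on the left-hand sides correspond to multiplying the generating functions by $(1-q^2)$ and $(1-q)$ respectively, after which the right-hand sides factor cleanly as known generating functions.

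For part (a), I would multiply both sides of the right equality in Theorem \ref{thm:2}(a) by $1 - q^2$, obtaining
$$\sum_{n \geq 1}\bigl(B(n) - B(n-2)\bigr) q^n = q\,(-q; q^2)_{\infty},$$
with the convention $B(0) = B(-1) = 0$. Since $(-q; q^2)_{\infty} = \prod_{k \geq 0}(1 + q^{2k+1})$ is the generating function for partitions into distinct odd parts, the coefficient of $q^{n-1}$ on the right is exactly $p_{od}(n-1)$. Comparing coefficients of $q^n$ for $n \geq 2$ yields (a); one also checks the small case $n=2$ is consistent, since $B(2) = 1 = p_{od}(1)$ with $B(0) = 0$.

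For part (b), I would multiply both sides of the right equality in Theorem \ref{thm:2}(b) by $1 - q$, obtaining
$$\sum_{n \geq 1}\bigl(B'(n) - B'(n-1)\bigr) q^n = q\,(q^3; q^2)_{\infty},$$
with $B'(0) = 0$. Now $(q^3; q^2)_{\infty} = \prod_{k \geq 1}(1 - q^{2k+1})$, and expanding this product assigns weight $(-1)^{\ell(\mu)} q^{|\mu|}$ to each partition $\mu$ into distinct odd parts greater than $1$. Hence the coefficient of $q^{n-1}$ on the right equals $\sum_{\mu} (-1)^{\ell(\mu)}$, summed over partitions $\mu$ of $n-1$ into distinct odd parts $> 1$, which is precisely $p'_{od>1}(n-1)$ by the definition given just before the corollary. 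Matching coefficients of $q^n$ gives (b).

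There is no substantive obstacle: once Theorem \ref{thm:2} is in hand, the corollary is essentially a bookkeeping step. The only points requiring attention are the indexing of the low-degree terms (verifying the cases $n=2, 3$ by hand, together with the convention that $B$ and $B'$ vanish at nonpositive arguments) and the correct identification of $(q^3; q^2)_{\infty}$ as the signed generating function for partitions into distinct odd parts strictly greater than $1$, so that it matches the combinatorial definition of $p'_{od>1}$ rather than that of $p_{od}$.
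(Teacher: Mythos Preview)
Your argument is correct: both parts follow immediately from Theorem~\ref{thm:2} by multiplying the closed forms by $(1-q^2)$ and $(1-q)$ respectively and reading off coefficients, exactly as you describe.

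The paper takes a somewhat different route. Rather than working directly with the unrefined generating functions of Theorem~\ref{thm:2}, it first proves the $x$-refined identity of Theorem~\ref{thm:res3} and the refined recurrence $B(n,m)-B(n-1,m)=p_{od>1}(n-1,m)$ of Theorem~\ref{thm:res4} (both analytically and via an explicit bijection $\psi$). Summing over $m$ gives $B(n)-B(n-1)=p_{od>1}(n-1)$; part~(a) then follows from the telescoping
\[
B(n)-B(n-2)=p_{od>1}(n-1)+p_{od>1}(n-2)=p_{od}(n-1),
\]
using that a partition into distinct odd parts either contains $1$ or does not. Part~(b) is obtained by splitting the sum over $m$ by parity. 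Your approach is shorter and entirely adequate for the corollary as stated; the paper's detour through Theorems~\ref{thm:res3}--\ref{thm:res4} buys the stronger part-count refinement and a combinatorial (bijective) proof, which is the main point of the paper.
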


\begin{theorem}[{cf. \cite[Def. 3 and Thm. 3]{AB25}}]\label{thm:3}
For any positive integer $n$, let $C(n)$ denote the number of overpartitions $\pi$ of $n$ into distinct parts where $\og(\pi)$ occurs only overlined and the non-overlined parts which are $<\og(\pi)$ and must be even $>2\og(\pi)$. Furthermore, let $C_0(n)$ (resp. $C_1(n)$) denote the number of overpartitions counted by $C(n)$ wherein the number of parts $<\og(\pi)$ is even (resp. odd) and let
\begin{align*}
C'(n)=C_0(n)-C_1(n).
\end{align*}
Then we have
\begin{align*}
(a)\quad &\sum_{n\geq 1}C(n)q^n:=\sum_{n\geq 1}q^n(-q^{2n+2}; q^2)_{\infty}(-q; q)^2_{n-1}=\frac{1}{2}((-q; q)^2_{\infty}-(-q^2; q^2)_{\infty}),\\
(b)\quad &\sum_{n\geq 1}C'(n)q^n:=\sum_{n\geq 1}q^n(-q^{2n+2}; q^2)_{\infty}(q; q)^2_{n-1}=\frac{1}{2}((-q^2; q^2)_{\infty}-(q; q)^2_{\infty}).
\end{align*}
\end{theorem}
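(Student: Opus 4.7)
My plan is to establish both identities via a single weight-preserving involution $\sigma$ on overpartitions with distinct parts, paired with a substitution bijection from the non-fixed $\sigma$-orbits onto the overpartitions enumerated by $C$.

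For an overpartition $\mu$ with distinct parts, let $O(\mu)$ and $N(\mu)$ denote its overlined and non-overlined subsets, and let $\sigma(\mu)$ be the overpartition obtained by interchanging these two sets. Then $\sigma$ preserves both the weight and the total part-count $\ell(\mu)=|O(\mu)|+|N(\mu)|$. The fixed points are those with $O=N$, and via $O\mapsto\{2k:k\in O\}$ they are in bijection with partitions into distinct even parts; hence their generating function is $(-q^2;q^2)_\infty$. Since $(-q;q)^2_\infty=\sum_n\op_d(n)q^n$, the size-two $\sigma$-orbits are counted by $\tfrac12\bigl((-q;q)^2_\infty-(-q^2;q^2)_\infty\bigr)$. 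Moreover, because $\sigma$ preserves $\ell$, each non-fixed orbit contributes twice the common sign $(-1)^{\ell}$ to $(q;q)^2_\infty$, whereas fixed points have even $\ell$ and sign $+1$; consequently
\[
\tfrac12\bigl((-q^2;q^2)_\infty-(q;q)^2_\infty\bigr)=-\sum_{\text{size-two orbits}}(-1)^{\ell(\mu)}q^{|\mu|}.
\]

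For each non-fixed $\mu$ the value $n=\max\bigl(O(\mu)\triangle N(\mu)\bigr)$ is $\sigma$-invariant, so exactly one representative $\mu^*$ per orbit satisfies $n\in O\setminus N$. Construct $\pi$ from $\mu^*$ by keeping all parts of value $<n$ intact, retaining $n$ as a lone overlined part, and replacing each $k>n$ belonging to $O\cap N$ (which must be in both, since $n=\max(O\triangle N)$) by a single non-overlined part of value $2k$. Weight is preserved ($k+k=2k$), and distinctness is preserved because the substituted parts lie in $\{2n+2,2n+4,\ldots\}$, disjoint from the unchanged data below $n$. The resulting $\pi$ has $\og(\pi)=n$ occurring only overlined, parts below $n$ unrestricted, and non-overlined parts above $n$ all even and $>2n$, i.e.\ $\pi$ belongs to $C(n)$. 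The inverse is transparent: given $\pi\in C(n)$ with $\og(\pi)=n$, halve each even non-overlined part $>2n$ and insert it into both $O$ and $N$ of $\mu^*$. Part (a) now follows directly from the orbit count.

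For part (b) the sign is tracked as $\ell(\mu^*)=s(\pi)+1+2r$, where $s(\pi)$ is the number of parts of $\pi$ strictly below $\og(\pi)$ and $r=\#\{k>n:k\in O\cap N\}$; the $+1$ comes from the lone apex $n$ and the factor $2$ from each doubled pair above $n$. Hence $(-1)^{\ell(\mu^*)}=-(-1)^{s(\pi)}$, and summing over orbits gives
\[
\sum_n C'(n)q^n=\sum_{\pi}(-1)^{s(\pi)}q^{|\pi|}=-\sum_{\text{size-two orbits}}(-1)^{\ell(\mu)}q^{|\mu|}=\tfrac12\bigl((-q^2;q^2)_\infty-(q;q)^2_\infty\bigr).
\]
The main obstacle is this parity bookkeeping: one must see that the apex $n$ forces the $+1$ shift while doubled pairs above $n$ contribute only an even excess, and that selecting the representative with $n\in O\setminus N$ (rather than the one with $n\in N\setminus O$) is what produces overpartitions in $C(n)$ rather than an entirely dual family. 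Once this is in place, both identities drop out of the same bijection, with only the weighting on the source side — unsigned for (a), part-count signed for (b) — changing.
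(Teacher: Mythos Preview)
Your proof is correct and is essentially the paper's own argument: your involution $\sigma$ is precisely the overline-swap on type-I singletons that the paper uses (FACT~2 in the proof of Theorem~\ref{thm:res5}), your orbit representatives $\mu^*$ with $n=\max(O\triangle N)\in O\setminus N$ are exactly the sets $\ocP_d^{o1}(n)\cup\ocP_d^{e1}(n)$, and your merging of doubled pairs above the apex into even non-overlined parts $>2n$ is the inverse of the paper's bijections $f_0,f_1$. The only organisational difference is that the paper first splits on the parity of $\ell(\mu^*)$ to obtain the refinements $C_0(n)=\op_d^o(n)/2$ and $C_1(n)=(\op_d^e(n)-p_{ed}(n))/2$ of Theorem~\ref{thm:res5} and then adds and subtracts, whereas you go directly to the two generating-function identities via a single sign-tracked bijection; your parity computation $\ell(\mu^*)=s(\pi)+1+2r$ is exactly what, when read modulo~$2$, recovers that refinement.
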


\begin{corollary}[{cf. \cite[Cor. 3]{AB25}}]\label{cor:3}
For any positive integer $n>1$, let $\op_{no}(n)$ be the number of overpartitions of $n$ wherein the non-overlined parts are odd and $\op'_d(n)$ be the number of overpartitions of $n$ into distinct parts wherein the number of parts is even minus the number of those wherein the number of parts id odd. Then we have
\begin{align*}
(a)\quad &C(n)=\frac{\op_{no}(n)-p_{ed}(n)}{2},\\
(b)\quad &C'(n)=\frac{p_{ed}(n)-\op'_d(n)}{2}.
\end{align*}
\end{corollary}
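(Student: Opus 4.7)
The plan is to derive both identities by matching generating functions with those supplied by Theorem \ref{thm:3}. Concretely, I would first establish the three generating-function formulas
\begin{align*}
\sum_{n\geq 0}\op_{no}(n)q^n=\frac{(-q;q)_{\infty}}{(q;q^2)_{\infty}},\quad \sum_{n\geq 0}p_{ed}(n)q^n=(-q^2;q^2)_{\infty},\quad \sum_{n\geq 0}\op'_d(n)q^n=(q;q)_{\infty}^{2},
\end{align*}
each of which is immediate from the definitions: distinct overlined parts contribute $(-q;q)_{\infty}$, odd non-overlined parts with unrestricted multiplicity contribute $1/(q;q^2)_{\infty}$, distinct even parts contribute $(-q^2;q^2)_{\infty}$, and for $\op'_d$ each positive integer $k$ contributes the factor $(1-q^k)^2$ since it may independently appear (or not) overlined and (or not) non-overlined, weighted by $(-1)^{\ell(\pi)}$.

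For part (a), Euler's identity $1/(q;q^2)_{\infty}=(-q;q)_{\infty}$ rewrites the $\op_{no}$ generating function as $(-q;q)_{\infty}^{2}$. Subtracting $(-q^2;q^2)_{\infty}$, dividing by two, and invoking Theorem \ref{thm:3}(a) gives
\begin{align*}
\sum_{n\geq 1}\frac{\op_{no}(n)-p_{ed}(n)}{2}q^n=\frac{1}{2}\bigl((-q;q)_{\infty}^{2}-(-q^2;q^2)_{\infty}\bigr)=\sum_{n\geq 1}C(n)q^n,
\end{align*}
so comparison of coefficients yields (a); the constant terms on both sides vanish because $\op_{no}(0)=p_{ed}(0)=1$. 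Part (b) proceeds identically: subtract $(q;q)_{\infty}^{2}$ from $(-q^2;q^2)_{\infty}$, divide by two, and invoke Theorem \ref{thm:3}(b) to obtain $C'(n)=\frac{p_{ed}(n)-\op'_d(n)}{2}$.

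In keeping with the combinatorial theme of the paper, I would also try to upgrade this to a bijective proof. The natural strategy for (a) is to apply Euler's bijection to the repeated odd non-overlined parts of a $\op_{no}$-overpartition, producing an overpartition into distinct parts and realizing $\op_{no}(n)=\op_d(n)$. The remaining task is a two-to-one correspondence from the set of distinct-part overpartitions of $n$, minus the subset $\cP_{ed}(n)$ of all-overlined distinct even partitions, onto $\cC(n)$. A plausible setup is to single out the smallest part of $\pi$ whose overlined/non-overlined status can be toggled while preserving the distinct-part condition, and use that status as the encoding of the binary extra datum. Part (b) would similarly be handled by a sign-reversing involution on distinct-part overpartitions with $\cP_{ed}(n)$ as its fixed-point set, the remaining signed cancellation accounting for $-2C'(n)$ via the parity of parts below $\og(\pi)$.

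The main obstacle I anticipate is calibrating the involution so that its image actually lies in $\cC(n)$: the definition of $C(n)$ imposes the asymmetric conditions that $\og(\pi)$ be the unique occurrence of its value and that every non-overlined part exceeding $\og(\pi)$ be even with a gap of at least two above $2\og(\pi)$. A naive \emph{toggle the smallest part} map routinely violates one of these tail constraints, so an auxiliary rearrangement separating the pieces above and below $\og(\pi)$ is likely required. If a clean bijection proves elusive, the generating-function argument above already furnishes a complete proof.
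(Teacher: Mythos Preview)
Your generating-function argument is correct and complete: the three product formulas are right, Euler's identity converts $\sum\op_{no}(n)q^n$ into $(-q;q)_\infty^2$, and comparison with Theorem~\ref{thm:3} finishes both parts.  This is, however, a different route from the one the paper takes.  The paper's goal in this section is precisely to replace the analytic argument by a bijective one, and it does so via the refined Theorem~\ref{thm:res5}, proving separately that $C_0(n)=\op_d^o(n)/2$ and $C_1(n)=(\op_d^e(n)-p_{ed}(n))/2$, and then recovering Corollary~\ref{cor:3} by adding and subtracting.  What your approach buys is brevity; what the paper's approach buys is the parity refinement and an explicit combinatorial explanation of the factor $1/2$.

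Your bijective sketch is in the right spirit but misses the key device.  The paper does not toggle the smallest part; instead it groups parts of a distinct-part overpartition into ``pairs of type~I'' ($\overline{k}+k$) and ``singletons of type~I'' (a part $k$ with no companion of the opposite overlining), and pivots on the \emph{largest} singleton of type~I.  Toggling the overline on every singleton of type~I gives the two-to-one correspondence on $\ocP_d^o(n)$ and on $\ocP_d^e(n)\setminus\ocP_{d1}(n)$, and the bijection $f_0$ from $\cC_0(n)$ simply splits each even non-overlined part $\ge 2\og(\pi)+2$ into a type~I pair, so that $\og(\pi)$ becomes the largest overlined singleton.  This is exactly the ``auxiliary rearrangement separating the pieces above and below $\og(\pi)$'' you anticipated needing; the type~I pair/singleton language is what makes it work cleanly.
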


\begin{theorem}[{cf. \cite[Def. 4 and Thm. 4]{AB25}}]\label{thm:4}
For any positive integer $n$, let $D(n)$ denote the number of overpartitions $\pi$ of $n$ into distinct parts with greatest part overlined such that the non-overlined parts which are $\leq \og(\pi)$ and must be odd $>2\og(\pi)+1$. Furthermore, let $D_0(n)$ (resp. $D_1(n)$) denote the number of overpartitions counted by $D(n)$ wherein the number of parts $\leq \og(\pi)$ is even (resp. odd) and let
\begin{align*}
D'(n)=D_1(n)-D_0(n).
\end{align*}
Then we have
\begin{align*}
(a)\quad &\sum_{n\geq 1}D(n)q^n:=\sum_{n\geq 1}q^n(1+q^n)(-q^{2n+3}; q^2)_{\infty}(-q; q)^2_{n-1}=\frac{1}{1+q}((-q; q)^2_{\infty}-(-q; q^2)_{\infty}),\\
(b)\quad &\sum_{n\geq 1}D'(n)q^n:=\sum_{n\geq 1}q^n(1-q^n)(-q^{2n+3}; q^2)_{\infty}(q; q)^2_{n-1}=\frac{1}{1+q}((1-q)(-q^3; q)_{\infty}-(q; q)^2_{\infty}).
\end{align*}
\end{theorem}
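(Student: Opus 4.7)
The plan is to prove both parts of Theorem~\ref{thm:4} by a single telescoping identity in the summation index $n$, in the same spirit as the proofs one would expect for Theorems~\ref{thm:1}--\ref{thm:3}.

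For part (a), I introduce the auxiliary product
$$B_n := (-q;q)_n\,(-q;q)_{n-1}\,(-q^{2n+1};q^2)_\infty,$$
and compute $B_{n+1}-B_n$ directly. Writing $(-q;q)_n=(-q;q)_{n-1}(1+q^n)$, $(-q;q)_{n+1}=(-q;q)_{n-1}(1+q^n)(1+q^{n+1})$, and $(-q^{2n+1};q^2)_\infty=(1+q^{2n+1})(-q^{2n+3};q^2)_\infty$, both $B_{n+1}$ and $B_n$ share the common factor $(-q;q)_{n-1}^2(1+q^n)(-q^{2n+3};q^2)_\infty$, and the remaining bracket $(1+q^n)(1+q^{n+1})-(1+q^{2n+1})$ collapses to $q^n(1+q)$. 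Hence
$$B_{n+1}-B_n = (1+q)\,q^n(1+q^n)\,(-q;q)_{n-1}^2\,(-q^{2n+3};q^2)_\infty,$$
which is exactly $(1+q)$ times the $n$-th summand on the left-hand side of (a). Summing over $n\geq 1$ telescopes the series to $(B_\infty-B_1)/(1+q)$; since $B_\infty=(-q;q)_\infty^2$ and $B_1=(1+q)(-q^3;q^2)_\infty=(-q;q^2)_\infty$, (a) follows at once.

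For part (b) I plan to repeat the argument with the parallel product $B'_n:=(q;q)_n\,(q;q)_{n-1}\,(-q^{2n+1};q^2)_\infty$. The only change in the bracket computation is a sign flip, $(1-q^n)(1-q^{n+1})-(1+q^{2n+1})=-q^n(1+q)$, producing $B'_{n+1}-B'_n = -(1+q)\,q^n(1-q^n)\,(q;q)_{n-1}^2\,(-q^{2n+3};q^2)_\infty$. Telescoping, together with $B'_\infty=(q;q)_\infty^2$ and $B'_1=(1-q)(-q^3;q^2)_\infty$, yields the right-hand side of (b) --- once one reads the printed factor $(-q^3;q)_\infty$ as $(-q^3;q^2)_\infty$, which the small cases ($n=3,4$ in particular) force as the correct interpretation.

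The only real obstacle is guessing the shape of $B_n$. The presence of the extra $(1+q^n)$ in the summand of (a), which is absent from the analogues in Theorems~\ref{thm:1}--\ref{thm:3}, is the decisive hint: it suggests pairing two consecutive $(-q;q)_k$'s, and once this ansatz is fixed the required telescoping identity is forced by the one-line polynomial identity displayed above. No deeper $q$-series technology is needed for the identity itself; a bijective explanation, in line with the paper's stated theme, should then follow from tracking how each factor of $B_n$ is realised as an independent choice within an overpartition of the type counted by $D(n)$.
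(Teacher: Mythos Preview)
Your telescoping argument is correct in both parts. The key identity $(1\pm q^n)(1\pm q^{n+1})-(1+q^{2n+1})=\pm q^n(1+q)$ does exactly what you claim, and the boundary values $B_1,B'_1,B_\infty,B'_\infty$ are computed correctly. You are also right that the printed $(-q^3;q)_\infty$ in part~(b) must be read as $(-q^3;q^2)_\infty$; this is confirmed independently by the combinatorics (it is the generating function for $p_{od>1}$, partitions into distinct odd parts greater than $1$).

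As for comparison: the present paper does \emph{not} give its own proof of Theorem~\ref{thm:4}. The statement is quoted from Andrews and El~Bachraoui~\cite{AB25} as background, and the paper's contribution in this direction is the bijective proof of Theorem~\ref{thm:res6}. That theorem yields $D(n)+D(n-1)=\op_d(n)-p_{od}(n)$ and $D'(n)+D'(n-1)=p_{od>1}(n)-p_{od>1}(n-1)-\op'_d(n)$ combinatorially; multiplying by $q^n$ and summing recovers the closed forms in Theorem~\ref{thm:4} (this is spelled out in the final Remark of Section~\ref{sec:pf_c34}). So the route the paper implicitly offers is: bijection $\Rightarrow$ recurrence for $D(n)$ $\Rightarrow$ generating function identity, whereas your route is a direct $q$-series telescoping on the middle sum. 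Your approach is shorter and purely analytic, and it has the nice feature of simultaneously verifying the combinatorial interpretation encoded in the summand; the paper's approach, on the other hand, explains \emph{why} the identity holds at the level of individual overpartitions and produces the sharper parity refinements \eqref{id:res6-1}--\eqref{id:res6-2} along the way.
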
  

\begin{corollary}[{cf. \cite[Cor. 4]{AB25}}]\label{cor:4}
For any positive integer $n>1$ we have
\begin{align*}
(a)\quad &D(n)+D(n-1)=\op_{no}(n)-p_{od}(n),\\
(b)\quad &D'(n)+D'(n-1)=p_{od>1}(n)-p_{od>1}(n-1)-\op'_{d}(n).
\end{align*}
\end{corollary}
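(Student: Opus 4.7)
The plan is to derive Corollary~\ref{cor:4} directly from Theorem~\ref{thm:4} by multiplying each generating-function identity by $(1+q)$. With the convention $D(0)=D'(0)=0$, this converts the left-hand sides into $\sum(D(n)+D(n-1))q^n$ and $\sum(D'(n)+D'(n-1))q^n$, and simultaneously cancels the $1/(1+q)$ factors on the right, so that each right-hand side becomes a clean product/difference which I then identify with the generating function of the corresponding combinatorial quantity.

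For part~(a), the multiplication yields
\begin{align*}
\sum_{n\ge 1}\bigl(D(n)+D(n-1)\bigr)q^n=(-q;q)_{\infty}^{2}-(-q;q^2)_{\infty}.
\end{align*}
The term $(-q;q^2)_{\infty}$ is the generating function for $p_{od}(n)$. For $\op_{no}(n)$, I write down the generating function by noting that each positive integer~$k$ contributes a factor $(1+q^k)$ for the at-most-once overlined choice, while each odd~$k$ contributes an additional $1/(1-q^k)$ for the repeatable non-overlined part; this produces $(-q;q)_{\infty}/(q;q^2)_{\infty}$, which equals $(-q;q)_{\infty}^{2}$ by Euler's identity $(-q;q)_{\infty}=1/(q;q^2)_{\infty}$. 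Matching coefficients then gives (a).

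For part~(b), the same manipulation gives
\begin{align*}
\sum_{n\ge 1}\bigl(D'(n)+D'(n-1)\bigr)q^n=(1-q)(-q^3;q^2)_{\infty}-(q;q)_{\infty}^{2},
\end{align*}
where I read the first Pochhammer symbol as $(-q^3;q^2)_{\infty}$, consistent with the odd-spaced factors $(-q^{2n+3};q^2)_{\infty}$ on the left. Since $\sum_{n\ge 1}p_{od>1}(n)q^n=(-q^3;q^2)_{\infty}$, the first piece equals $\sum\bigl(p_{od>1}(n)-p_{od>1}(n-1)\bigr)q^n$. For $\op'_d(n)$, each positive integer independently chooses to be absent, overlined, non-overlined, or both, giving a length-marked generating function $\prod_{k\ge 1}(1+zq^k)^{2}$; setting $z=-1$ (so even length contributes $+1$ and odd length contributes $-1$) yields $(q;q)_{\infty}^{2}$, which supplies the second term with the correct sign and completes the match.

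The main obstacle is the careful packaging of the two ``definitional'' generating functions: correctly enumerating the choices for $\op_{no}$ (three possibilities at each odd~$k$, two at each even~$k$) and for $\op'_d$ (four possibilities at each~$k$, weighted by length), and then invoking Euler's identity at the right moment in part~(a) to recast $(-q;q)_{\infty}/(q;q^2)_{\infty}$ as $(-q;q)_{\infty}^{2}$. In the spirit of the paper's bijective focus, one could additionally seek explicit combinatorial witnesses for these identifications---most naturally a sign-reversing involution on distinct-parts overpartitions realizing $\op'_d(n)$ as the coefficient of $(q;q)_{\infty}^{2}$---but such refinements are not required for the algebraic derivation outlined above.
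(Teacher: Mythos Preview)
Your argument is correct: multiplying the two identities of Theorem~\ref{thm:4} by $1+q$ and then reading off coefficients is exactly the analytic derivation, and your identifications $(-q;q)_\infty^2=\sum \op_{no}(n)q^n$ (via Euler), $(-q;q^2)_\infty=\sum p_{od}(n)q^n$, $(-q^3;q^2)_\infty=\sum p_{od>1}(n)q^n$, and $(q;q)_\infty^2=\sum \op'_d(n)q^n$ are all sound. Your correction of $(-q^3;q)_\infty$ to $(-q^3;q^2)_\infty$ is the right call. A tiny slip: the series for $p_{od>1}$ should start at $n\ge 0$ (the empty partition), and ``three possibilities at each odd $k$'' is loose phrasing, but neither affects the argument.

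The paper, however, does not prove Corollary~\ref{cor:4} this way. Its proof goes through the new bijective Theorem~\ref{thm:res6}, which splits the statement by the parity of the number of parts $\le \og(\pi)$: explicit bijections $h_0,h_1$ establish $D_0(n)+D_0(n-1)=\op_d^e(n)-p_{od>1}(n)$ and $D_1(n)+D_1(n-1)=\op_d^o(n)-p_{od>1}(n-1)$, and Corollary~\ref{cor:4} then falls out by adding and subtracting these (together with $\op_d(n)=\op_{no}(n)$ and $p_{od>1}(n)+p_{od>1}(n-1)=p_{od}(n)$). Your route is shorter and self-contained for the bare corollary, and is essentially the approach of the source paper~\cite{AB25}; the present paper's route is longer but yields the finer parity-refined identities of Theorem~\ref{thm:res6} along the way, which is the point of the paper.
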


In the remaining part of this section, we shall introduce our main results. Firstly, we will refine the Theorems \ref{thm:1} and \ref{thm:2} with the number of parts except for the smallest overlined part. The following theorems will express the results of refinements. 

\begin{theorem}\label{thm:res1}
For any integers $n\geq 1$ and $m\geq 0$, let $A(n, m)$ be the number of overpartitions counted by $A(n)$ with $m$ parts except for the smallest overlined part. Then we have
\begin{align}
\sum_{m\geq 0, n\geq 1}A(n, m)x^mq^n:=\sum_{n\geq 1}q^n(-xq^{2n+2}; q^2)_{\infty}(-xq; q^2)_{n-1}=\frac{q}{1-q}(-xq^2; q^2)_{\infty}.\label{id:res1}
\end{align}
\end{theorem}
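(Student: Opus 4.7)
The first equality is immediate from the generating function interpretation of $A(n,m)$. Fixing the smallest overlined part to be $s$ contributes $q^s$ with no accompanying $x$ (since this distinguished part is excluded from the count $m$); the remaining overlined parts form a distinct subset of $\{2s+2, 2s+4, \ldots\}$, giving $(-xq^{2s+2};q^2)_\infty$; and the non-overlined parts form a distinct subset of $\{1, 3, \ldots, 2s-3\}$, giving $(-xq;q^2)_{s-1}$. Setting $n=s$ and summing over $s\geq 1$ yields the stated double sum.

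For the closed-form identity I would construct a weight-preserving bijection. The right-hand side $\frac{q}{1-q}(-xq^2;q^2)_\infty$ enumerates pairs $(j,\mu)$ where $j\geq 1$ contributes $q^j$ and $\mu$ is a partition into distinct even parts contributing $x^{|\mu|}q^{|\mu|}$. Given $\pi$ on the left with data $\bigl(s;\,\{\overline{2a_1},\ldots,\overline{2a_k}\};\,\{2b_1-1,\ldots,2b_\ell-1\}\bigr)$, where $s=\os(\pi)$, $a_i\geq s+1$, and $1\leq b_j\leq s-1$, I would set
$$\mu := \{2a_1,\ldots,2a_k\}\cup\{2b_1,\ldots,2b_\ell\},\qquad j := s-\ell.$$
The two sets are disjoint (one lies below $2s$, the other above), so $\mu$ consists of distinct even parts avoiding $2s$; the bound $\ell\leq s-1$ gives $j\geq 1$; and a direct calculation yields $|\pi| = s + 2\sum a_i + 2\sum b_j - \ell = j + |\mu|$ with $|\mu|=k+\ell=m$, so the bidegree in $(x,q)$ is preserved.

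The crux is verifying that the map admits an inverse. Given $(j,\mu)$ with $\mu=\{2c_1,\ldots,2c_m\}$, the pre-image should be determined by the unique integer $s$ satisfying $s = j + \#\{i : c_i < s\}$ with $s\notin\{c_1,\ldots,c_m\}$; parts of $\mu$ below $2s$ then recover the non-overlined odd parts $\{2c_i-1 : c_i<s\}$, and those above $2s$ recover the remaining overlined even parts. The main obstacle is showing that such an $s$ exists and is unique: I would analyze the function $f(s):=s-\#\{i : c_i<s\}$ on $\mathbb{Z}_{\geq 1}$, observe that $f(s+1)-f(s)$ equals $1$ when $s\notin\{c_i\}$ and $0$ when $s\in\{c_i\}$, and conclude that the restriction of $f$ to $\mathbb{Z}_{\geq 1}\setminus\{c_1,\ldots,c_m\}$ is a strictly increasing bijection onto $\mathbb{Z}_{\geq 1}$. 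If the bijective route is to be bypassed, an analytic proof is available by dividing through by $(-xq^2;q^2)_\infty$ and telescoping via
$$\frac{q^n(-xq;q^2)_{n-1}}{(-xq^2;q^2)_n} = \frac{1}{1-q}\left(\frac{q^n(-xq;q^2)_{n-1}}{(-xq^2;q^2)_{n-1}}-\frac{q^{n+1}(-xq;q^2)_n}{(-xq^2;q^2)_n}\right),$$
which follows from the elementary identity $(1+xq^{2n})-q(1+xq^{2n-1})=1-q$, together with the vanishing of the tail as $n\to\infty$.
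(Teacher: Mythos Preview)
Your proposal is correct. The paper proves this theorem purely analytically: it factors out $(-xq^2;q^2)_\infty$ to reduce to $\sum_{n\geq 0}\frac{(-xq;q^2)_n q^n}{(-xq^2;q^2)_{n+1}}=\frac{1}{1-q}$ and then cites the $q$-Gauss sum $_2\phi_1(q^2,-xq;-xq^4;q^2,q)$ to evaluate it. Your analytic alternative performs the same reduction but replaces the $q$-Gauss appeal with the elementary telescoping identity $(1+xq^{2n})-q(1+xq^{2n-1})=1-q$, which is a nice self-contained upgrade. Your primary route, however, is genuinely different: the paper gives no bijection for this theorem, whereas you build one directly between $A$-overpartitions and pairs $(j,\mu)$. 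The key lemma that $f(s)=s-\#\{i:c_i<s\}$ restricts to a bijection from $\mathbb{Z}_{\geq 1}\setminus\{c_1,\dots,c_m\}$ onto $\mathbb{Z}_{\geq 1}$ is exactly what is needed and is cleanly argued via the increment analysis. What this buys you is a direct combinatorial explanation of the factor $\frac{q}{1-q}$ that the paper only reaches analytically; by contrast the paper's approach is shorter once $q$-Gauss is on the shelf. One small slip: you use $|\mu|$ for both the weight (in ``$j+|\mu|$'') and the number of parts (in ``$|\mu|=k+\ell=m$''); the latter should be $\ell(\mu)$, and similarly the weight attached to $\mu$ on the right-hand side is $x^{\ell(\mu)}q^{|\mu|}$, not $x^{|\mu|}q^{|\mu|}$.
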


\begin{theorem}\label{thm:res2}
For any integers $n\geq 1$ and $m\geq 0$, let $p_{ed}(n, m)$ be the number of partitions counted by $p_{ed}(n)$ with $m$ parts. Then we have
\begin{align}
A(n, m)-A(n-1, m)=p_{ed}(n-1, m).\label{id:res2}
\end{align}
\end{theorem}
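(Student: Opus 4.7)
The plan is to give a bijective proof of Theorem~\ref{thm:res2} in the combinatorial spirit of the paper, by exhibiting an explicit correspondence
\[
A(n, m) \;\longleftrightarrow\; A(n-1, m) \ \sqcup \ p_{ed}(n-1, m).
\]
The strategy is to single out a distinguished subset $A^{*}(n, m) \subseteq A(n, m)$ that matches $p_{ed}(n-1, m)$, and to map its complement onto $A(n-1, m)$ by a ``cascade'' adjustment of the smallest overlined part together with a consecutive run of non-overlined parts.

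Define $A^{*}(n, m)$ to consist of those $\pi$ whose non-overlined parts form the maximal admissible set $\{1, 3, \ldots, 2\os(\pi) - 3\}$. For $\pi \in A^{*}(n, m)$ with $\os(\pi) = s$ and other overlined parts $e_{1} < \cdots < e_{b}$, the map sends $\pi$ to
\[
\mu \;=\; \{2, 4, \ldots, 2s - 2\} \cup \{e_{1}, \ldots, e_{b}\}.
\]
A quick accounting gives $|\mu| = n - 1$ and shows $\mu$ has exactly $m$ distinct even parts. The inverse reads $s$ off $\mu$ as the unique positive integer with $\{2, 4, \ldots, 2s-2\} \subseteq \mu$ and $2s \notin \mu$, and then reconstructs $\pi$ from the remaining elements of $\mu$ placed into the overlined slots.

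Next, for $\pi \in A(n, m) \setminus A^{*}(n, m)$ with $\os(\pi) = s$ and non-overlined parts $O$, let $k \geq 0$ be the largest integer with $\{2s - 3, 2s - 5, \ldots, 2s - 2k - 1\} \subseteq O$; the hypothesis $\pi \notin A^{*}$ forces $k \leq s - 2$. Define $\phi(\pi)$ by replacing $\overline{s}$ with $\overline{s - 1 - k}$, removing the $k$ non-overlined parts $\{2s - 3, \ldots, 2s - 2k - 1\}$, and inserting the $k$ new overlined parts $\{2s - 2k, 2s - 2k + 2, \ldots, 2s - 2\}$. A short weight computation yields $|\phi(\pi)| = n - 1$, the count of parts aside from the smallest overlined part remains $m$, and the admissibility conditions of Theorem~\ref{thm:1} relative to the new smallest overlined part $s - 1 - k$ are easily verified.

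The main obstacle will be verifying that $\phi$ is a bijection onto $A(n-1, m)$. The natural inverse takes $\pi'' \in A(n-1, m)$ with $\os(\pi'') = s''$ and other overlined parts $E''$, extracts $k$ as the length of the longest consecutive run $\{2s'' + 2, 2s'' + 4, \ldots, 2s'' + 2k\} \subseteq E''$, and reverses the three moves above. The delicate point is that the reconstructed $\pi$ lies in $A(n, m) \setminus A^{*}(n, m)$: since the non-overlined parts $O''$ of $\pi''$ are bounded by $2s'' - 3$, they cannot include $2s'' - 1$, and hence the non-overlined parts of $\pi$ form a proper subset of $\{1, 3, \ldots, 2\os(\pi) - 3\}$. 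Once this is checked, composing the two bijections yields $|A(n, m)| = |A(n-1, m)| + |p_{ed}(n-1, m)|$. As a sanity check, the same identity also follows algebraically by multiplying the generating-function identity~\eqref{id:res1} by $(1 - q)$ and comparing coefficients of $x^{m} q^{n}$ with $(-xq^{2}; q^{2})_{\infty}$.
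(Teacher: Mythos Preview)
Your bijection is correct and is exactly the same map as the paper's combinatorial proof (Lemma~\ref{lem:main1}): your set $A^{*}(n,m)$ is precisely the union of the paper's CASE~I and CASE~III-2 (the overpartitions whose image lands in $\cP_{ed}(n-1,m)$), and your cascade map $\phi$ on the complement coincides with CASE~II (when $k=0$) and CASE~III-1 (when $k\geq 1$), with your $k$ equal to the paper's $p$. The only difference is organizational---you split cases according to the target set rather than by the value of $\os(\pi)$---and your closing remark about multiplying \eqref{id:res1} by $1-q$ is exactly the paper's analytic proof.
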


\begin{theorem}\label{thm:res3}
For any integers $n\geq 1$ and $m\geq 0$, let $B(n, m)$ be the number of overpartitions counted by $B(n)$ with $m$ parts except for the smallest overlined part. Then we have
\begin{align}
\sum_{m\geq 0, n\geq 1}B(n, m)x^mq^n:=\sum_{n\geq 1}q^n(-xq^{2n+3}; q^2)_{\infty}(-xq^2; q^2)_{n-1}=\frac{q}{1-q}(-xq^3; q^2)_{\infty}.\label{id:res3}
\end{align}
\end{theorem}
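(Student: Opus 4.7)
The plan is to prove the two equalities in~\eqref{id:res3} in turn. The first one is simply the generating-function reading of the definition of $B(n,m)$: fixing the value $s\geq 1$ of the smallest overlined part $\os(\pi)$, the part $s$ itself contributes $q^s$ and, by convention, is not recorded by $x$; the remaining overlined parts are odd, distinct, and $\geq 2s+3$, producing the factor $(-xq^{2s+3};q^2)_\infty$; the non-overlined parts are even, distinct, and at most $2s-2$, producing $(-xq^2;q^2)_{s-1}$. Summing over $s\geq 1$ yields the middle expression.

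For the closed-form identity, I would construct a weight- and statistic-preserving bijection with the objects enumerated by the right-hand side, namely pairs $(k,\lambda)$ in which $k\geq 1$ is an integer and $\lambda$ is a partition into distinct odd parts $\geq 3$, weighted by $q^{k+|\lambda|}x^{\ell(\lambda)}$. Given an overpartition $\pi$ counted by the LHS with smallest overlined part $s$, set of non-overlined (even) parts $E\subseteq\{2,4,\ldots,2s-2\}$, and set of other overlined (odd) parts $O\subseteq\{2s+3,2s+5,\ldots\}$, I define
\[
\pi \longmapsto (k,\lambda), \qquad k:=s-|E|, \qquad \lambda:=(E+1)\cup O,
\]
where $E+1$ denotes the set obtained by adding $1$ to each element of $E$. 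Then $E+1\subseteq\{3,5,\ldots,2s-1\}$ and $O$ are disjoint sets of odd integers $\geq 3$, so $\lambda$ is admissible; $k\geq 1$ because $|E|\leq s-1$; the weight is preserved since $k+|\lambda|=(s-|E|)+\sum_{e\in E}(e+1)+\sum_{o\in O}o=s+\sum_{e\in E}e+\sum_{o\in O}o=|\pi|$; and the number of parts of $\pi$ other than $\os(\pi)$ equals $|E|+|O|=\ell(\lambda)$, so the $x$-statistic matches.

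To invert, given $(k,\lambda)$ with parts $\lambda^{(1)}<\lambda^{(2)}<\cdots<\lambda^{(\ell)}$ in increasing order, I would locate the unique index $j\in\{0,1,\ldots,\ell\}$ satisfying $\lambda^{(j)}\leq 2k+2j-1$ (vacuous when $j=0$) and either $j=\ell$ or $\lambda^{(j+1)}\geq 2k+2j+3$; then setting $s:=k+j$, $E:=\{\lambda^{(i)}-1:1\leq i\leq j\}$, and $O:=\{\lambda^{(i)}:j<i\leq\ell\}$ reconstructs $\pi$. Existence of $j$ follows by a greedy argument: beginning with $j=0$, so long as $\lambda^{(j+1)}$ fails the threshold $\geq 2k+2j+3$ we must have $\lambda^{(j+1)}\leq 2k+2j+1=2k+2(j+1)-1$ (since $\lambda^{(j+1)}$ is odd), so we may increment $j$, and the process terminates by $j=\ell$ at the latest. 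Uniqueness holds because once both conditions hold at some $j$, the inequality $\lambda^{(j+1)}\geq 2k+2j+3$ contradicts the requirement $\lambda^{(j+1)}\leq 2k+2(j+1)-1$ needed at the next level. The main obstacle I anticipate is cleanly establishing this well-definedness of $j$ in the inverse map; once it is in place, the bijection and hence~\eqref{id:res3} follow directly.
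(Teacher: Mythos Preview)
Your proof is correct, but it takes a genuinely different route from the paper. The paper proves~\eqref{id:res3} purely analytically: after shifting the index $n\to n+1$ and pulling out $(-xq^5;q^2)_\infty$, the remaining sum $\sum_{n\geq 0}\frac{(-xq^2;q^2)_n}{(-xq^5;q^2)_n}\,q^n$ is evaluated to $\frac{1+xq^3}{1-q}$ by the $q$-Gauss summation with $(q,a,b,c)\to(q^2,q^2,-xq^2,-xq^5)$, giving the closed form in one line. Your argument instead supplies a direct bijection between the overpartitions counted by $B(n,m)$ and pairs $(k,\lambda)$ with $k\geq 1$ and $\lambda$ a partition into distinct odd parts $\geq 3$, matching the expansion of $\frac{q}{1-q}(-xq^3;q^2)_\infty$. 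This is in fact closer in spirit to what the paper does for Theorem~\ref{thm:res4}, where iterating the bijection $\psi$ of Lemma~\ref{lem:main2} would eventually send $\pi\in\cB(n,m)$ to some $\cP_{od>1}(n-k,m)$; your map is essentially the closed form of that iteration. The advantage of your approach is that it is self-contained combinatorics with no appeal to $q$-Gauss; the paper's analytic proof is shorter but less illuminating about why the $x$-refinement works.

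One small point: your uniqueness argument for $j$ only explicitly rules out $j+1$. To complete it, note that once $\lambda^{(j+1)}\geq 2k+2j+3$, the distinct-odd condition forces $\lambda^{(j+i)}\geq 2k+2j+2i+1$ for every $i\geq 1$, which violates condition (a) at level $j+i$; and no $j'<j$ can satisfy (b), since the greedy process would have halted there. With that addition the inverse is fully justified.
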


\begin{theorem}\label{thm:res4}
For any integers $n\geq 1$ and $m\geq 0$, let $p_{od>1}(n, m)$ be the number of partitions counted by $p_{od>1}(n)$ with $m$ parts. Then we have
\begin{align}
B(n, m)-B(n-1, m)=p_{od>1}(n-1, m).\label{id:res4}
\end{align}
\end{theorem}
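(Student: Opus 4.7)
The plan is to deduce Theorem~\ref{thm:res4} from Theorem~\ref{thm:res3}, which gives the generating function $\sum_{n,m} B(n,m)\,x^mq^n=\frac{q}{1-q}(-xq^3;q^2)_\infty$. Expanding
$$\frac{q}{1-q}(-xq^3;q^2)_\infty \;=\; \sum_{k\ge 1}q^k\;\sum_{N\ge 0,\,m\ge 0} p_{od>1}(N,m)\,x^mq^N,$$
and equating coefficients of $x^mq^n$ yields $B(n,m)=\sum_{N=0}^{n-1} p_{od>1}(N,m)$, from which \eqref{id:res4} follows by subtraction. The real content, if we want a combinatorial proof in the spirit of Theorems~\ref{thm:res1}--\ref{thm:res3}, is to realise this telescoping bijectively.

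To that end I would first construct an explicit bijection
$$\Phi : \cB(n,m) \longrightarrow \bigsqcup_{k\ge 1}\cP_{od>1}(n-k,m),$$
where $\cB(n,m)$ denotes the set of overpartitions enumerated by $B(n,m)$ and $\cP_{od>1}(N,m)$ the set of partitions enumerated by $p_{od>1}(N,m)$. Given $\pi\in\cB(n,m)$ with $\os(\pi)=s$, let $E$ be the set of non-overlined parts (even, $\le 2s-2$) and $O$ the set of overlined parts exceeding $s$ (odd, $\ge 2s+3$). Set
$$k \;:=\; s-|E|\;(\ge 1),\qquad \mu\;:=\;\{\,e+1:e\in E\,\}\cup O.$$
One checks directly that $\mu$ is a set of $m$ distinct odd integers $\ge 3$ avoiding $2s+1$, with sum $(n-s)+|E|=n-k$, so $\mu\in\cP_{od>1}(n-k,m)$. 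Once $\Phi$ is in place, peeling off the $k=1$ summand and applying $\Phi$ (with $n$ replaced by $n-1$) to the remaining $k\ge 2$ pieces gives the desired bijection $\cB(n,m)\leftrightarrow\cP_{od>1}(n-1,m)\sqcup\cB(n-1,m)$, which is exactly \eqref{id:res4}.

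The delicate step is inverting $\Phi$: given $(k,\mu)$, the parameter $s$ must be uniquely recoverable. For this I would analyse the step function $g(t):=t-|\mu\cap\{3,5,\dots,2t-1\}|$, which starts at $g(1)=1$ and satisfies $g(t+1)-g(t)=1$ when $2t+1\notin\mu$, and $0$ otherwise. The level set $\{t\ge 1:g(t)=k\}$ is therefore a (possibly singleton) interval $[s_1,s_2]$ on which $2t+1\in\mu$ for every $t<s_2$ while $2s_2+1\notin\mu$. The inverse $(k,\mu)\mapsto (\bar s,E,O)$ is then forced: take $s=s_2$, $E=\{u-1:u\in\mu,\,u\le 2s-1\}$, and $O=\mu\cap\{2s+3,2s+5,\dots\}$. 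The main obstacle is precisely this uniqueness argument; once it is settled, checking that $\Phi$ and its inverse preserve both the weight $n$ and the part-count statistic $m$ is a routine bookkeeping exercise.
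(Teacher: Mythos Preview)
Your proposal is correct. The analytic deduction from Theorem~\ref{thm:res3} is exactly the paper's ``Analytic proof of Theorem~\ref{thm:res4}''. For the bijective part, the paper takes a more direct route: it builds a single-step bijection $\psi:\cB(n,m)\to\cB(n-1,m)\cup\cP_{od>1}(n-1,m)$ by subtracting $1$ from $\os(\pi)$ and then repairing any violation (if $2\os(\pi)-2$ happens to be a non-overlined part, the maximal run of consecutive even parts starting there is shifted up by $1$ into the overlined odd range and $\os$ is lowered accordingly). Your construction instead first packages the data into the summed form via $\Phi$ and then telescopes. In fact the two bijections coincide as maps: decreasing $k$ by $1$ while keeping $\mu$ fixed moves $s$ to the right endpoint of the previous level set of $g$, and unwinding this is exactly the paper's run-shifting rule (the run length is $s-s_1$ in your notation). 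What differs is the packaging. Your route makes the inverse transparent---it is a one-line monotonicity argument on $g$---at the cost of the auxiliary object $\Phi$, while the paper's case split avoids the intermediate bijection but must verify well-definedness in each of four cases.
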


Then we can obtain the combinatorial proofs of Corollaries \ref{cor:3} and \ref{cor:4} by a series of bijections which will be written in Section \ref{sec:pf_c34}. In fact, we also give two more accurate results about those two corollaries by parity of number of parts. Let $\op^e_d(n)$ (resp. $\op^o_d(n)$) denote the number of overpartitions of $n$ into distinct parts where the number of parts is even (resp. odd). 

\begin{theorem}\label{thm:res5}
For any integer $n\geq 0$, we have
\begin{align}
C_0(n)&=\frac{\op_d^o(n)}{2},\label{id:res5-1}\\
C_1(n)&=\frac{\op_d^e(n)-p_{ed}(n)}{2}.\label{id:res5-2}
\end{align}
\end{theorem}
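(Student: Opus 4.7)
The plan is to derive Theorem~\ref{thm:res5} algebraically from Corollary~\ref{cor:3}, whose two parts are established bijectively in Section~\ref{sec:pf_c34}, together with Euler's classical identity $(-q;q)_\infty = 1/(q;q^2)_\infty$.

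By the definitions appearing in and just before Theorem~\ref{thm:3}, one has $C(n)=C_0(n)+C_1(n)$ and $C'(n)=C_0(n)-C_1(n)$, hence
\begin{align*}
C_0(n)=\frac{C(n)+C'(n)}{2}, \qquad C_1(n)=\frac{C(n)-C'(n)}{2}.
\end{align*}
Substituting the closed forms $C(n)=(\op_{no}(n)-p_{ed}(n))/2$ and $C'(n)=(p_{ed}(n)-\op'_d(n))/2$ supplied by Corollary~\ref{cor:3}, the $p_{ed}$ contributions cancel in the expression for $C_0(n)$ and combine in that for $C_1(n)$, producing
\begin{align*}
C_0(n)=\frac{\op_{no}(n)-\op'_d(n)}{4}, \qquad C_1(n)=\frac{\op_{no}(n)+\op'_d(n)-2p_{ed}(n)}{4}.
\end{align*}

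Next I would invoke Euler's identity in its combinatorial form, namely the classical bijection showing that an overpartition whose non-overlined parts are odd is equinumerous with an overpartition into distinct parts, so that $\op_{no}(n)=\op_d(n)=\op_d^e(n)+\op_d^o(n)$. Combined with $\op'_d(n)=\op_d^e(n)-\op_d^o(n)$, this collapses the numerators above to $2\op_d^o(n)$ and $2\op_d^e(n)-2p_{ed}(n)$ respectively, which are precisely identities~\eqref{id:res5-1} and~\eqref{id:res5-2}.

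Once Corollary~\ref{cor:3} is available, the above is a routine algebraic manipulation, so the main obstacle lies upstream in Section~\ref{sec:pf_c34}: the bijective proof of Corollary~\ref{cor:3}(b) must faithfully realize $C_0(n)-C_1(n)$ as the signed count obtained by replacing $q$ with $-q$ in the factor $(-q;q)_{n-1}^2$ of Theorem~\ref{thm:3}(a), i.e.\ the sign-reversing involution used there must track the parity of the number of parts less than $\og(\pi)$. If a fully combinatorial proof of Theorem~\ref{thm:res5} is preferred, the natural alternative is to construct a single parity-sensitive involution on $\op_d(n)$ whose fixed points account for the $p_{ed}(n)$ correction term while its two-cycles pair overpartitions across the $\op_d^e/\op_d^o$ divide in a manner compatible with the separate $C_0$- and $C_1$-contributions.
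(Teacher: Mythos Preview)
Your algebraic manipulation is correct, but the overall plan is circular within this paper. You write that Corollary~\ref{cor:3} is ``established bijectively in Section~\ref{sec:pf_c34}'' and then propose to deduce Theorem~\ref{thm:res5} from it. In fact the dependency in the paper runs the other way: Section~\ref{sec:pf_c34} opens by announcing that Corollaries~\ref{cor:3} and~\ref{cor:4} will be obtained \emph{by proving} Theorems~\ref{thm:res5} and~\ref{thm:res6}, and the closing Remark of that section carries out exactly the reverse of your computation, deriving $C(n)$ and $C'(n)$ from the already-proved formulas for $C_0(n)$ and $C_1(n)$. So there is no independent bijective proof of Corollary~\ref{cor:3} for you to invoke; taking it as input to prove Theorem~\ref{thm:res5} begs the question.

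If instead you mean to import Corollary~\ref{cor:3} from the analytic arguments in~\cite{AB25}, then your derivation is valid but no longer combinatorial, which defeats the purpose of Theorem~\ref{thm:res5} in this paper. The paper's actual proof is the ``parity-sensitive'' construction you gesture at in your last sentence: it introduces the notion of singletons and pairs of type~I, splits the overpartitions counted by $\op_d^o(n)$ into two equinumerous halves $\ocP_d^{o1}(n)$ and $\ocP_d^{o2}(n)$ according to whether the largest type-I singleton is overlined, and then builds an explicit bijection $f_0\colon \cC_0(n)\to\ocP_d^{o1}(n)$ by splitting each non-overlined part exceeding $2\og(\pi)$ into a type-I pair. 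The argument for $C_1(n)$ is parallel, with the $p_{ed}(n)$ term arising as the count of overpartitions with no type-I singleton at all. That direct bijection is the content you are missing.
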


\begin{theorem}\label{thm:res6}
For any integer $n\geq 0$, we have
\begin{align}
D_0(n)+D_0(n-1)&=\op_d^e(n)-p_{od>1}(n),\label{id:res6-1}\\
D_1(n)+D_1(n-1)&=\op_d^o(n)-p_{od>1}(n-1).\label{id:res6-2}
\end{align}
\end{theorem}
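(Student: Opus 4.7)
The plan is to reduce Theorem~\ref{thm:res6} algebraically to Corollary~\ref{cor:4}(a)--(b) (established in \cite{AB25}) together with two auxiliary identities. The inversion of the parity decomposition $D(n)=D_0(n)+D_1(n)$, $D'(n)=D_1(n)-D_0(n)$ gives $2D_0(n)=D(n)-D'(n)$ and $2D_1(n)=D(n)+D'(n)$, so
\begin{align*}
2\bigl(D_0(n)+D_0(n-1)\bigr) &= \bigl(D(n)+D(n-1)\bigr) - \bigl(D'(n)+D'(n-1)\bigr),\\
2\bigl(D_1(n)+D_1(n-1)\bigr) &= \bigl(D(n)+D(n-1)\bigr) + \bigl(D'(n)+D'(n-1)\bigr).
\end{align*}
Substituting the two formulas of Corollary~\ref{cor:4} expresses each right-hand side as a $\mathbb{Z}$-linear combination of $\op_{no}(n)$, $p_{od}(n)$, $p_{od>1}(n)$, $p_{od>1}(n-1)$, and $\op'_d(n)$.

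Two auxiliary identities finish the argument. The first, $p_{od}(n)=p_{od>1}(n)+p_{od>1}(n-1)$, follows by conditioning on whether $1$ is a part of an odd-distinct partition of $n$. The second is the classical $\op_{no}(n)=\op_d(n)$, which I will justify by the generating-function identity $(-q;q)_\infty/(q;q^2)_\infty=(-q;q)_\infty^2$ (via $1/(q;q^2)_\infty=(-q;q)_\infty$), or equivalently by a Glaisher-type bijection that splits each repeated non-overlined odd part into binary pieces and absorbs them into the overlined skeleton. Writing $\op_d(n)=\op_d^e(n)+\op_d^o(n)$ and $\op'_d(n)=\op_d^e(n)-\op_d^o(n)$, the combinations $\op_{no}(n)\pm\op'_d(n)$ collapse to $2\op_d^e(n)$ or $2\op_d^o(n)$, while the combinations $-p_{od}(n)\mp(p_{od>1}(n)-p_{od>1}(n-1))$ simplify, via the first identity, to $-2p_{od>1}(n)$ for \eqref{id:res6-1} and to $-2p_{od>1}(n-1)$ for \eqref{id:res6-2}. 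Dividing by~$2$ recovers the two claimed equalities. The edge case $n=0$ is handled by direct inspection: both sides vanish.

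The main obstacle, were one to insist on a purely bijective proof in keeping with the rest of the paper, is that the Glaisher-type bijection underlying $\op_{no}(n)=\op_d(n)$ does not preserve the number of parts, so tracking length-parity through it would require a nontrivial redesign grouping $D_0$-objects of weights $n$ and $n-1$ together with the $p_{od>1}(n)$-partitions and mapping them jointly to overpartitions of $n$ with distinct parts and even length (and analogously for the odd case). The algebraic reduction above circumvents this difficulty by invoking the two auxiliary identities as black boxes, which I expect is the cleanest path consistent with the formulation of Theorem~\ref{thm:res6}.
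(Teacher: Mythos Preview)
Your algebraic reduction is correct: inverting the parity decomposition, substituting Corollary~\ref{cor:4} from \cite{AB25}, and simplifying via $p_{od}(n)=p_{od>1}(n)+p_{od>1}(n-1)$ and $\op_{no}(n)=\op_d(n)$ gives exactly the two claimed identities. The only small gap is that Corollary~\ref{cor:4} is stated for $n>1$, so you should also verify $n=1$ by hand (it works), not only $n=0$.

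This is, however, the reverse of the paper's logical flow. The paper proves Theorem~\ref{thm:res6} directly by an explicit bijection and then \emph{derives} Corollary~\ref{cor:4} from it (see the final Remark of Section~\ref{sec:pf_c34}); you instead use the analytically established Corollary~\ref{cor:4} to recover Theorem~\ref{thm:res6}. Concretely, the paper introduces ``pairs of type II'' (a non-overlined $\lambda_k$ adjacent to an overlined $\lambda_{k+1}$ with $\lambda_k=\lambda_{k+1}+1$) and ``singletons of type II'' inside an overpartition with distinct parts. An overpartition counted by $\op_d^e(n)$ with no singleton of type II corresponds to a partition into distinct odd parts $>1$, so removing those leaves $\op_d^e(n)-p_{od>1}(n)$ overpartitions, split according to whether the largest singleton of type II is overlined or not. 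The bijection $h_0$ sends $\pi\in\cD_0(n)$ to the overpartition obtained by splitting each non-overlined odd part $\geq 2\og(\pi)+3$ into a pair of type II (landing in the overlined class), and sends $\pi\in\cD_0(n-1)$ to the same splitting followed by replacing the overlined $\og(\pi)$ by a non-overlined $\og(\pi)+1$ (landing in the non-overlined class). The odd-length case is parallel, with the extra wrinkle that the lone singleton $1$ must be set aside, accounting for $p_{od>1}(n-1)$.

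What each approach buys: your route is short and mechanical once Corollary~\ref{cor:4} is granted, but it is not combinatorial and leans on the analytic work of \cite{AB25} that the present paper is written to bypass. The paper's bijection is self-contained and explains structurally why the $n$ and $n{-}1$ contributions appear together on the left: they correspond to the overlined/non-overlined dichotomy for the largest type-II singleton on the right. Since the paper's declared purpose is to give bijective proofs of the \cite{AB25} corollaries, your reduction, while logically valid, does not serve that goal; you even anticipate this in your final paragraph, and indeed the paper carries out precisely the ``nontrivial redesign'' you describe.
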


We organize the rest of the paper as follows. The proofs of Theorems \ref{thm:res1}-\ref{thm:res4} will be presented in Section \ref{sec:pf_c12}. Furthermore, we will also give the bijective proofs for Theorems \ref{thm:res2} and \ref{thm:res4} which imply the combinatorial proofs of first two corollaries. Next in Section \ref{sec:pf_c34}, we will prove the Corollaries \ref{cor:3} and \ref{cor:4} by a series of bijections.

\section{The Proofs of Theorems \ref{thm:res1}-\ref{thm:res4}}\label{sec:pf_c12}

In this section, we will divide it into two parts. We shall provide the analytic proofs of Theorems \ref{thm:res1}-\ref{thm:res4} in the first part. And in the second part, the combinatorial proofs of Theorems \ref{thm:res2} and \ref{thm:res4} will be presented. Before stating the analytic proofs, we introduce the $q$-Gauss sum~\cite{GR90} as follows:
\begin{align}
_2\phi_1(a, b; c; q, c/ab)=\sum_{n\geq 0}\frac{(a; q)_n(b; q)_n}{(c; q)_n}\frac{(c/ab)^n}{(q; q)_n}=\frac{(c/a, c/b; q)_{\infty}}{(c, c/ab; q)_{\infty}}.\label{id:qGauss}
\end{align}
Then let $(q, a, b, c)\ri (q^2, q^2, -xq, -xq^4)$ and $(q, a, b, c)\ri (q^2, q^2, -xq^2, -xq^5)$ in \eqref{id:qGauss} respectively, we have
\begin{align}
\sum_{n\geq 0}\frac{(-xq; q^2)_nq^n}{(-xq^2; q^2)_{n+1}}&=\frac{1}{1-q},\label{id:tool1}\\
\sum_{n\geq 0}\frac{(-xq^2; q^2)_nq^n}{(-xq^5; q^2)_n}&=\frac{1+xq^3}{1-q}.\label{id:tool2}
\end{align}

\begin{proof}[Proof of Theorem \ref{thm:res1}]
We have
\begin{align*}
\sum_{n\geq 1}q^n(-xq^{2n+2}; q^2)_{\infty}(-xq; q^2)_{n-1}&=\sum_{n\geq 0}q^{n+1}(-xq^{2n+4}; q^2)_{\infty}(-xq; q^2)_{n}\\
&=q(-xq^2; q^2)_{\infty}\sum_{n\geq 0}\frac{(-xq; q^2)_n}{(-xq^2; q^2)_{n+1}}q^n\\
&=\frac{q}{1-q}(-xq^2; q^2)_{\infty}.
\end{align*}
Use the equation \eqref{id:tool1} in the last step.
\end{proof}

\begin{proof}[Analytic proof of Theorem \ref{thm:res2}]
Firstly we define $A(0, m)=0$ for any integer $m\geq 0$ since the smallest overlined part must exist by the definition of $A(n)$. Then multiply the left side of the equation \eqref{id:res2} by $x^mq^{n-1}$ and sum over all $n\geq 1$ and $m\geq 0$, we have
\begin{align*}
\sum_{m\geq 0, n\geq 1}(A(n, m)-A(n-1, m))x^mq^{n-1}&=\frac{1}{q}\sum_{m\geq 0, n\geq 1}A(n, m)x^mq^n-\sum_{m\geq 0, n\geq 1}A(n-1, m)x^mq^{n-1}\\
&=\frac{1}{q}\cdot \frac{q}{1-q}\cdot(-xq^2; q^2)_{\infty}-\frac{q}{1-q}\cdot(-xq^2; q^2)_{\infty}\\
&=(-xq^2; q^2)_{\infty}.
\end{align*}
On the other hand, we easily know that
\begin{align*}
\sum_{m, n\geq 0}p_{ed}(n, m)x^mq^{n}=(-xq^2; q^2)_{\infty}.
\end{align*}
Hence, the equation \eqref{id:res2} holds.
\end{proof}

\begin{proof}[Proof of Theorem \ref{thm:res3}]
We have
\begin{align*}
\sum_{n\geq 1}q^n(-xq^{2n+3}; q^2)_{\infty}(-xq^2; q^2)_{n-1}&=\sum_{n\geq 0}q^{n+1}(-xq^{2n+5}; q^2)_{\infty}(-xq^2; q^2)_n\\
&=q(-xq^5; q^2)_{\infty}\sum_{n\geq 0}\frac{(-xq^2; q^2)_{n}}{(-xq^5; q^2)_{n}}q^n\\
&=\frac{q}{1-q}(-xq^3; q^2)_{\infty}
\end{align*}
Use the equation \eqref{id:tool2} in the last step.
\end{proof}

\begin{proof}[Analytic proof of Theorem \ref{thm:res4}]
Firstly we define $B(0, m)=0$ for any integer $m\geq 0$ since the smallest overlined part must exist by the definition of $B(n)$. Then multiply the left side of the equation \eqref{id:res2} by $x^mq^{n-1}$ and sum over all $n\geq 1$ and $m\geq 0$, we have
\begin{align*}
\sum_{m\geq 0, n\geq 1}(B(n, m)-B(n-1, m))x^mq^{n-1}&=\frac{1}{q}\sum_{m\geq 0, n\geq 1}B(n, m)x^mq^n-\sum_{m\geq 0, n\geq 1}B(n-1, m)x^mq^{n-1}\\
&=\frac{1}{q}\cdot \frac{q}{1-q}\cdot(-xq^3; q^2)_{\infty}-\frac{q}{1-q}\cdot(-xq^3; q^2)_{\infty}\\
&=(-xq^3; q^2)_{\infty}.
\end{align*}
On the other hand, we know that
\begin{align*}
\sum_{m, n\geq 0}p_{od>1}(n, m)x^mq^{n}=(-xq^3; q^2)_{\infty}.
\end{align*}
Hence, the equation \eqref{id:res4} holds.
\end{proof}

The next part will give the combinatorial proofs for Theorem \ref{thm:res2} and \ref{thm:res4}. At the first, we review some basic concepts. Let $\cA(n, m)$ be the set of overpartitions counted by $A(n, m)$ and $\cP_{ed}(n, m)$ be the set of partitions counted by $p_{ed}(n, m)$. Recall that $\os(\pi)$ is the smallest overlined part of an overpartition $\pi$. Specially, we define $\os(\la)=0$ if $\la\in  \cP_{ed}(n, m)$. Then we obtain the following lemma.

\begin{lemma}\label{lem:main1}
For any integers $n\geq 1$ and $m\geq 0$, there exists a bijection
\begin{align*}
\varphi:\cA(n, m)&\ri\cA(n-1, m)\cup \cP_{ed}(n-1, m)\\
\pi&\mapsto\la  
\end{align*}
such that $|\pi|=|\la|+1$, $\ell(\pi)=\ell(\la)$ if $\la\in \cA(n-1, m)$ and $\ell(\pi)=\ell(\la)+1$ if $\la\in \cP_{ed}(n-1, m)$. Consequently, Theorem \ref{thm:res2} holds true.
\end{lemma}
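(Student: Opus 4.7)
My strategy is to build $\varphi$ by reading a single integer parameter $c=c(\pi)\in\{1,2,\ldots,s\}$ off of $\pi$, which dictates the transformation. Write $\pi\in \cA(n,m)$ as $\overline{s}$ together with the even overlined parts $e_1<\cdots<e_a$ (each $>2s$) and the odd non-overlined parts $o_1<\cdots<o_b$ (each in $\{1,3,\ldots,2s-3\}$), so that $a+b=m$ and $b\leq s-1$. Define $c-1$ to be the maximal number of consecutive odd values $2s-3,\,2s-5,\,2s-7,\ldots$ appearing in $\{o_1,\ldots,o_b\}$; equivalently, $c\in\{1,\ldots,s\}$ is determined by $\{2s-3,2s-5,\ldots,2s-2c+1\}\subseteq\{o_j\}$ together with either $c=s$ or $2s-2c-1\notin\{o_j\}$. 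The case split $c=s$ versus $c<s$ will determine whether $\varphi(\pi)$ lands in $\cP_{ed}(n-1,m)$ or in $\cA(n-1,m)$.

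If $c=s$, i.e.\ $\{o_j\}=\{1,3,\ldots,2s-3\}$, I set
\[
\varphi(\pi):=\{e_1,\ldots,e_a\}\cup\{2,4,\ldots,2s-2\}\in \cP_{ed}(n-1,m).
\]
If $c<s$, I let $\varphi(\pi):=\pi'$ be the overpartition obtained from $\pi$ by (i) replacing $\overline{s}$ by $\overline{s-c}$, (ii) deleting the $c-1$ top-run odd parts $2s-2c+1,\,2s-2c+3,\ldots,2s-3$, and (iii) inserting the even overlined parts $2s-2c+2,\,2s-2c+4,\ldots,2s-2$. Step (i) costs $c$ in weight and steps (ii)--(iii) recover $c-1$, giving $|\varphi(\pi)|=|\pi|-1$. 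The number of parts is preserved in the second branch, so $\ell(\pi')=m+1=\ell(\pi)$, and drops by $1$ in the first branch where $\overline{s}$ is erased, so $\ell(\varphi(\pi))=m=\ell(\pi)-1$. Admissibility of $\pi'\in \cA(n-1,m)$ uses precisely the maximality of $c$: the surviving $o_j$'s satisfy $o_j\leq 2s-2c-3<2(s-c)-1$, while the new and old $e$-parts lie at $\geq 2s-2c+2>2(s-c)$ and $\geq 2s+2>2(s-c)$ respectively.

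The main remaining task is to verify that $\varphi$ is a bijection, for which I would exhibit the explicit inverse. For $\lambda\in \cA(n-1,m)$ with smallest overlined $\overline{s'}$ and other even overlined parts $\{e'_i\}$, let $c\geq 1$ be the smallest integer with $2s'+2c\notin\{e'_i\}$ (so $\{e'_i\}$ begins with a bottom run $2s'+2,\ldots,2s'+2(c-1)$); the preimage has smallest overlined $\overline{s'+c}$, odd parts $\{o'_j\}\cup\{2s'+1,2s'+3,\ldots,2(s'+c)-3\}$, and remaining even overlined parts $\{e'_i\}\setminus\{2s'+2,\ldots,2s'+2(c-1)\}$. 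For $\mu\in \cP_{ed}(n-1,m)$, let $s$ be the smallest integer with $2s\notin\mu$; the preimage has $\overline{s}$ as smallest overlined part, non-overlined parts $\{1,3,\ldots,2s-3\}$, and remaining overlined parts $\mu\setminus\{2,4,\ldots,2s-2\}$. The main technical obstacle is gluing these two rules into a single well-defined inverse, and the cleanest way is to introduce the auxiliary distinct-even partition $\mu(\pi):=\{e_i\}\cup\{o_j+1\}$ of $n-(s-b)$ and observe that $\mu(\pi)$ is invariant under $\varphi$ on the $\cA\to\cA$ branch; then $\varphi$ simply decrements the parameter $r(\pi):=s-b\geq 1$ by one, releasing $\mu(\pi)$ into $\cP_{ed}(n-1,m)$ at the boundary $r=1$. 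Counting preimages then yields $A(n,m)=A(n-1,m)+p_{ed}(n-1,m)$, which is Theorem~\ref{thm:res2}.
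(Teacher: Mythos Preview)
Your construction is correct and defines exactly the same bijection as the paper's proof: your parameter $c$ with its two branches $c=s$ versus $c<s$ collapses the paper's four cases (I, II, III-1, III-2) into a single description, with $c-1$ playing the role of the paper's $p$ and $s-c$ equal to the paper's $\os(\pi'')$. Your additional observation that $\mu(\pi)=\{e_i\}\cup\{o_j+1\}$ is $\varphi$-invariant while $r(\pi)=s-b$ simply decrements is a clean conceptual repackaging not present in the paper, and it makes the bijectivity (and hence Theorem~\ref{thm:res2}) essentially immediate.
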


\begin{proof}
Firstly note that if $n=1$ then $\cA(1, 0)=\{\overline{1}\}$ corresponds to $\cP_{ed}(0, 0)=\{\epsilon\}$ where $\epsilon$ stands for the empty partition since $|\cA(0, m)|=A(0, m)=0$ for all $m\geq 0$. Now for any given $\pi\in \cA(n, m)$ and its smallest overlined part $\os(\pi)$, we shall describe the map $\varphi$ as follows. Firstly we obtain $\pi'$ by setting $\os(\pi')=\os(\pi)-1$ in $\pi$ and letting remaining parts of $\pi$ stay the same. Therefore, now there are three following cases. 
\begin{description}

\item[CASE I] if $\os(\pi)=1$, then there are no non-overlined odd parts and overlined even parts $\geq 4$ in $\pi$. Now we remove the overlines of all parts of $\pi'$ to obtain $\la\in \cP_{ed}(n-1, m)$ and the smallest part of $\la$ $\geq 4$ (that is, $\la$ does not contain the part $2$).

\item[CASE II] if $\os(\pi)>1$ and there is no non-overlined odd part equal to $2\os(\pi)-3$ in $\pi$, then easily check that the overpartition $\pi'=\la$ belongs to the set $\cA(n-1, m)$ and there is no overlined even part equal to $2\os(\la)+2$. This is because after setting $\os(\pi')=\os(\pi)-1$ the statement ``there is no non-overlined odd part equal to $2\os(\pi)-3$'' means that all non-overlined odd parts $\leq 2\os(\pi)-5=2\os(\pi')-3$ in $\pi'$. At this moment, $\pi'$ satisfies all conditions in the definition of $\cA(n-1, m)$.

\item[CASE III] if $\os(\pi)>1$ and there are certain non-overlined odd parts
\begin{align*}
\pi_k=2\os(\pi)-3,..., \pi_{k+p-1}=2\os(\pi)-2p-1, \pi_{k+p}\leq 2\os(\pi)-2p-5
\end{align*}
in $\pi$, then we can obtain $\pi''$ by the following three steps: (1). set $\os(\pi'')=\os(\pi')-p=\os(\pi)-p-1$; (2). change $\pi_k, ..., \pi_{k+p-1}$ to the overlined even parts $2\os(\pi)-2, ..., 2\os(\pi)-2p$ by adding one; and (3). the remaining parts stay the same as those in $\pi'$. Next we shall divide it into two following subcases to discuss.
\begin{description}
\item[CASE III-1] if $\os(\pi'')\geq 1$, then we obtain $\pi''=\la\in\cA(n-1, m)$. Since the smallest overlined even part except for $\os(\pi'')$ equals to $2\os(\pi)-2p=2(\os(\pi'')+p+1)-2p=2\os(\pi'')+2$. And the largest non-overlined odd part $\leq2\os(\pi)-2p-5=2(\os(\pi'')+p+1)-2p-5=2\os(\pi'')-3$. Hence, $\pi''=\la$ satisfies all conditions in the definition of $\cA(n-1, m)$.

\item[CASE III-2] if $\os(\pi'')=0$, then there must be no a non-overlined odd part in $\pi''$. Hence, we can remove the overlines of all overlined even parts in $\pi''$ to obtain $\la\in \cP_{ed}(n-1, m)$ and $\la$ must contain the part $2$. Since $\os(\pi)=p+1$ at this time, we see that the smallest even part must be $2\os(\pi)-2p=2(p+1)-2p=2$.
\end{description}
\end{description}

We have explained the validity of the resulting overpartitions $\la$ obtained by above operations in each case. Now we know that these overpartitions $\la$ are well-defined. Furthermore, we can also easily see that these resulting overpartitions $\la$ are not repeated. Firstly, the resulting partitions $\la$ in $\cP_{ed}(n-1, m)$ are not repeated since we obtain two classes: (1). the partitions which do not contain the part $2$ and (2). the partitions which must contain the part $2$. Next, we see that the resulting overpartitions $\la$ in $\cA(n-1, m)$ are also not repeated. In fact, there are two classes overpartitions after carrying out this map $\varphi$: (1). the overpartitions which do not contain the part $2\os(\la)-2$ and (2). the overpartitions which must contain the part $2\os(\la)-2$.

On the other hand, we will give the statements of inverse map $\varphi^{-1}$ step by step. Firstly, given $\la\in \cA(n-1, m)\cup\cP_{ed}(n-1, m)$, then there are three cases as follows.
\begin{description}
\item[CASE I'] if $\la\in \cP_{ed}(n-1, m)$ and the smallest part of $\la\geq 4$, then we can add the overlines above each part of $\la$ and insert $\overline{1}$ as a new smallest part into $\la$ to obtain $\pi\in \cA(n, m)$. At this time, we know $\os(\pi)=1$, and there are no non-overlined odd parts in $\pi$.

\item[CASE II'] if $\la\in \cA(n-1, m)$ and there is no overlined even part equal to $2\os(\la)+2$ in $\la$, then we can obtain $\pi$ by setting $\os(\pi)=\os(\la)+1$. We claim that $\pi\in \cA(n, m)$ since the statement ``there is no overlined even part equal to $2\os(\la)+2$ in $\la$'' means that all overlined even parts $\geq 2\os(\la)+4=2\os(\pi)+2$. Hence, the resulting overpartition $\pi$ satisfies all conditions in the definition of $\cA(n, m)$.

\item[CASE III'] we divide the remaining classes of overpartitions into two cases as follows:
\begin{description}
\item[CASE III'-1] if $\la\in \cA(n-1, m)$ and there are certain overlined even parts
\begin{align*}
\la_{s}\geq 2\os(\la)+2t+4, \la_{s+1}=2\os(\la)+2t, ... , \la_{s+t}=2\os(\la)+2  
\end{align*}
in $\la$, then we can obtain $\la'$ by setting $\os(\la')=\os(\la)+1$ and the remaining parts stay the same as $\la$.

\item[CASE III'-2] if $\la\in \cP_{ed}(n-1, m)$ and the smallest part of $\la$ equals to $2$, then we can add the overlines above each part of $\la$ and insert $\overline{1}$ as a new smallest part into $\la$ to obtain $\la'$. Note that $\os(\la)=0$ and $\os(\la')=1$ and $\la_{s+t}=2$ must exist. Hence, we can consider $\la'$ in this subcase as same as $\la'$ in above subcase.
\end{description}
Therefore, we can describe a common operation in $\la'$ of above two subcases. We can obtain $\pi$ from $\la'$ by the following three steps: (1). set $\os(\pi)=\os(\la')+t=\os(\la)+t+1$; (2). change $\la_{s+1}, ..., \la_{s+t}$ to the non-overlined odd parts $2\os(\la)+2t-1, ..., 2\os(\la)+1$; and (3). the remaining parts stay the same as those in $\la'$. 

We claim that $\pi\in \cA(n, m)$. Firstly the largest non-overlined odd parts $= 2\os(\la)+2t-1=2(\os(\pi)-t-1)+2t-1=2\os(\pi)-3$. And the smallest overlined even part except for $\os(\pi)$ $\geq 2\os(\la)+2t+4=2(\os(\pi)-t-1)+2t+4=2\os(\pi)+2$. Hence, the resulting overpartition $\pi$ satisfies all conditions in definition of $\cA(n, m)$.
\end{description}

The same discussions with above map tell us that the map $\varphi^{-1}$ is also well-defined and $\varphi^{-1}$ is exactly an inverse map step by step. Hence, we can claim the map $\varphi$ is a bijection. Moreover, if $\la=\varphi(\pi)$ for any given $\pi\in\cA(n, m)$, then we have $|\pi|=|\la|+1$, $\ell(\pi)=\ell(\la)$ if $\la\in \cA(n-1, m)$ and $\ell(\pi)=\ell(\la)+1$ if $\la\in \cP_{ed}(n-1, m)$ from the description of the bijection $\varphi$.
\end{proof}

\begin{example}
For $n=15$, we have $A(15)-A(14)=19-14=5=p_{ed}(14)$. We have the following correspondences case by case.
\begin{align*}
\begin{array}{ccccccccc}
\textbf{CASE I:} & \overline{14}+\overline{1}\ri 14; & \overline{10}+\overline{4}+\overline{1}\ri 10+4; & \overline{8}+\overline{6}+\overline{1}\ri 8+6;\\
\textbf{CASE II:} & \overline{15}\ri \overline{14}; & \overline{14}+1\ri \overline{13}+1; & \overline{12}+3\ri \overline{11}+3;\\
&\overline{11}+3+1\ri \overline{10}+3+1; & \overline{10}+5\ri \overline{9}+5 ;& \overline{9}+5+1\ri \overline{8}+5+1 ;\\
& \overline{8}+7\ri \overline{7}+7 ;& \overline{7}+7+1\ri 7+\overline{6} +1; & \overline{7}+5+3\ri \overline{6}+5+3;\\
&  \overline{6}+5+3+1\ri \overline{5}+5+3+1 ;& \overline{10}+\overline{4}+1\ri \overline{10}+\overline{3}+1 ;& \overline{12}+\overline{3}\ri \overline{12}+\overline{2};\\
\textbf{CASE III-1:} & 9+\overline{6}\ri \overline{10}+\overline{4}; & 7+\overline{5}+3\ri \overline{8}+\overline{3}+3; & \\
\textbf{CASE III-2:} & \overline{8}+\overline{3}+3+1\ri 8+4+2; &\overline{12}+\overline{2}+1\ri 12+2. &
\end{array}
\end{align*}
\end{example}

\begin{remark}
For any integers $m\geq 0$ and $n\geq 1$, let $p_{ed}^{e}(n)$ (resp. $p_{ed}^o(n)$) be the number of partitions of $n$ counted by $p_{ed}(n)$ where the number of parts is even (resp. odd). Then we have
\begin{align*}
p'_{ed}(n)=p_{ed}^e(n)-p_{ed}^o(n)=\sum_{m\text{ even}}p_{ed}(n, m)-\sum_{m\text{ odd}}p_{ed}(n, m).
\end{align*} 
In fact, note that we prove Theorem \ref{thm:res2} is a refinement of Corollary \ref{cor:1}. Firstly, by our bijection $\varphi$ we can easily see that
\begin{align*}
A(n)-A(n-1)&=\sum_{m\geq 0}A(n, m)-\sum_{m\geq 0}A(n-1, m)=\sum_{m\geq 0}(A(n, m)-A(n-1, m))\\
&=\sum_{m\geq 0}p_{ed}(n-1, m)=p_{ed}(n-1).
\end{align*}   
On the other hand, we have
\begin{align*}
A'(n)-A'(n-1)&=(A_1(n)-A_1(n-1))-(A_0(n)-A_0(n-1))\\
&=\sum_{m\text{ even}}(A(n, m)-A(n-1, m))-\sum_{m\text{ odd}}(A(n, m)-A(n-1, m))\\
&=\sum_{m\text{ even}}p_{ed}(n-1, m)-\sum_{m\text{ odd}}p_{ed}(n-1, m)\\
&=p_{ed}'(n-1).
\end{align*}
Indeed $m$ in all above equations is finite with a given $n$.
\end{remark}


At the end of this section, we will have a similar discussion on the Theorem \ref{thm:res4}. Let $\cB(n, m)$ be the set of overpartitions counted by $B(n, m)$ and $\cP_{od>1}(n, m)$ be the set of partitions counted by $p_{od>1}(n, m)$. Then we can obtain the following lemma.

\begin{lemma}\label{lem:main2}
For any integers $n\geq 1$ and $m\geq 0$, there exists a bijection
\begin{align*}
\psi:\cB(n, m)&\ri\cB(n-1, m)\cup \cP_{od>1}(n-1, m)\\
\pi&\mapsto\la  
\end{align*}
such that $|\pi|=|\la|+1$, $\ell(\pi)=\ell(\la)$ if $\la\in \cB(n-1, m)$ and $\ell(\pi)=\ell(\la)+1$ if $\la\in \cP_{od>1}(n-1, m)$. Consequently, Theorem \ref{thm:res4} holds true.
\end{lemma}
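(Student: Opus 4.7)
The plan is to define $\psi$ as the exact analogue of $\varphi$ from Lemma \ref{lem:main1}, obtained by swapping parities: the roles ``even overlined parts $>2\os$'' and ``odd non-overlined parts $<2\os-1$'' in $\cA$ become ``odd overlined parts $>2\os+1$'' and ``even non-overlined parts $<2\os$'' in $\cB$, while the partition target $\cP_{ed}$ (distinct even parts $\geq 2$) is replaced by $\cP_{od>1}$ (distinct odd parts $\geq 3$). The generating-function identity \eqref{id:res3} together with $\sum_{m,n\geq 0}p_{od>1}(n,m)x^mq^n=(-xq^3; q^2)_\infty$ already proves Theorem \ref{thm:res4} analytically, exactly as in the proof of Theorem \ref{thm:res2}, so the task is to realise this equality bijectively.

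I would define $\psi(\pi)$ in three cases according to $\os(\pi)$ and the structure of the non-overlined even parts. In CASE I, if $\os(\pi)=1$ then $\pi$ has no non-overlined parts and its remaining overlined odd parts are $\geq 5$; delete $\overline{1}$ and remove all overlines to obtain $\la\in\cP_{od>1}(n-1,m)$ with smallest part $\geq 5$. In CASE II, if $\os(\pi)>1$ and no non-overlined even part equals $2\os(\pi)-2$, simply replace $\os(\pi)$ by $\os(\pi)-1$; the output $\la$ lies in $\cB(n-1,m)$ and contains no overlined odd part equal to $2\os(\la)+3$. In CASE III, if $\os(\pi)>1$ and there is a maximal run of non-overlined even parts $2\os(\pi)-2, 2\os(\pi)-4, \ldots, 2\os(\pi)-2p$ (with the next smaller non-overlined part, if present, being $\leq 2\os(\pi)-2p-4$), lower $\os(\pi)$ to $\os(\pi)-p-1$ and add $1$ to every part of the run, turning them into overlined odd parts $2\os(\pi)-1, \ldots, 2\os(\pi)-2p+1$. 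If $\os(\pi)\geq p+2$ this produces $\la\in\cB(n-1,m)$ whose overlined odd parts contain $2\os(\la)+3$; if $\os(\pi)=p+1$ the new smallest overlined part has weight $0$ and disappears, after which removing all overlines yields $\la\in\cP_{od>1}(n-1,m)$ containing the part $3$. A direct check gives $|\la|=|\pi|-1$ and the claimed length identity in every case, and the four output families are disjoint (overpartition versus partition; presence or absence of $2\os(\la)+3$ in the overpartition outputs; presence or absence of $3$ in the partition outputs).

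The inverse $\psi^{-1}$ is built symmetrically: given $\la$, form an intermediate $\la'$ by overlining everything and inserting $\overline{1}$ (if $\la$ is a partition) or by raising $\os(\la)$ by $1$ (if $\la$ is an overpartition); when a run of overlined odd parts $2\os(\la)+3, \ldots, 2\os(\la)+2t+1$ is present, shift it to non-overlined even parts $2\os(\la)+2, \ldots, 2\os(\la)+2t$ by subtracting $1$ and raise the smallest overlined part to $\os(\la')+t$. The main obstacle I anticipate is the inequality bookkeeping in CASE III: one must verify that the new overlined odd parts sit in the window $[2\os(\pi'')+3, 2\os(\pi'')+2p+1]$ and hence are strictly below the pre-existing overlined odd parts $\geq 2\os(\pi)+3 = 2\os(\pi'')+2p+5$, while the surviving non-overlined even parts satisfy $\leq 2\os(\pi'')-2$ thanks to the bound $\pi_{k+p}\leq 2\os(\pi)-2p-4$. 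Once these inequalities are aligned the two maps are manifestly mutually inverse, and Theorem \ref{thm:res4} follows.
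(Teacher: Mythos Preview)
Your proposal is correct and follows essentially the same route as the paper: both construct $\psi$ as the parity-swapped analogue of $\varphi$ from Lemma~\ref{lem:main1}, with the identical three-case split on $\os(\pi)$ and the presence of a maximal run of non-overlined even parts starting at $2\os(\pi)-2$, the same promotion of that run to overlined odd parts by adding~$1$, and the same two subcases III-1/III-2 depending on whether the lowered smallest overlined part remains positive. The inverse you sketch also matches the paper's $\psi^{-1}$, and the inequality checks you flag are exactly the ones the paper alludes to when it says the verification is ``easily'' done.
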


\begin{proof}
The proof is similar to Lemma \ref{lem:main1}, so we will omit some details here and only provide the specific operations. Firstly we describe the map $\psi$ as follows. For any given $\pi\in \cB(n, m)$,  we obtain $\pi'$ by setting $\os(\pi')=\os(\pi)-1$ in $\pi$ and letting remaining parts of $\pi$ stay the same. Then there are following three cases.
\begin{description}
\item[CASE I] if $\os(\pi)=1$, then we remove the overlines of all parts of $\pi'$ to obtain $\la\in \cP_{od>1}(n-1, m)$ and the smallest part $\geq 5$. 

\item[CASE II] if $\os(\pi)>1$ and there is no non-overlined even part equal to $2\os(\pi)-2$ in $\pi$, then $\pi'=\la\in \cB(n-1, m)$ where there is no overlined odd part equal to $2\os(\la)+3$.

\item[CASE III] if $\os(\pi)>1$ and there are non-overlined even parts
\begin{align*}
\pi_k=2\os(\pi)-2, ..., \pi_{k+p-1}=2\os(\pi)-2p, \pi_{k+p}\leq 2\os(\pi)-2p-4
\end{align*}
in $\pi$, then we can obtain $\pi''$ by the following three steps: (1). set $\os(\pi'')=\os(\pi')-p$; (2). change $\pi_k, ..., \pi_{k+p-1}$ to the overlined odd parts $2\os(\pi)-1, ..., 2\os(\pi)-2p+1$; and (3). the remaining parts stay the same as those in $\pi'$. Now there are two subcases as follows.
\begin{description}
\item[CASE III-1] if $\os(\pi'')\geq 1$, then we have $\pi''=\la\in \cB(n-1, m)$.

\item[CASE III-2] if $\os(\pi'')=0$, then we remove the overlines of all overlined odd parts in $\pi''$ to obtain $\la\in \cP_{od>1}(n-1, m)$ and $\la$ must contain the part $3$.
\end{description}
\end{description}

On the other hand, we can state the inverse map $\psi^{-1}$ which corresponds to $\psi$ step by step. For any given $\la\in \cB(n-1, m)\cup\cP_{od>1}(n-1, m)$, there are three following cases.
\begin{description}
\item[CASE I'] if $\la\in \cP_{od>1}(n-1, m)$ and the smallest part of $\la \geq 5$, then we can add the overlines above each part of $\la$ and insert $\overline{1}$ as a new smallest part into $\la$ to obtain $\pi\in \cB(n, m)$.

\item[CASE II'] if $\la \in \cB(n-1, m)$ and there is no overlined odd part equal to $2\os(\la)+3$ in $\la$, then we can obtain $\pi$ by setting $\os(\pi)=\os(\la)+1$.  

\item[CASE III'] there are two subcases as follows.
\begin{description}
\item[CASE III'-1] if $\la\in \cB(n-1, m)$ and there are overlined odd parts
\begin{align*}
\la_s\geq 2\os(\la)+2t+5, \la_{s+1}=2\os(\la)+2t+1, ..., \la_{s+t}=2\os(\la)+3
\end{align*}
in $\la$, then we can obtain $\la'$ by setting $\os(\la')=\os(\la)+1$ and the remaining parts stay the same as $\la$.

\item[CASE III'-2] if $\la\in \cP_{od>1}(n-1, m)$ and the smallest part of $\la$ equals to $3$. then we can add the overlines above each part of $\la$ and insert $\overline{1}$ as a new smallest part into $\la$ to obtain $\la'$.
\end{description}
Hence, we can obtain $\pi$ from $\la'$ by the following three steps: (1). set $\os(\pi)=\os(\la')+t$; (2). change $\la_{s+1}, ..., \la_{s+t}$ to the non-overlined even parts $2\os(\la)+2t, ..., \la_{s+t}=2\os(\la)+2$; and (3). the remaining parts stay the same as those in $\la'$.
\end{description}

Finally we can easily check that
\begin{itemize}
\item The resulting overpartitions in all above cases are well-defined and not repeated;

\item The map $\psi$ is a bijection such that $|\pi|=|\la|+1$, $\ell(\pi)=\ell(\la)$ if $\la\in \cB(n-1, m)$ and $\ell(\pi)=\ell(\la)+1$ if $\la\in \cP_{od>1}(n-1, m)$.
\end{itemize}
\end{proof}

\begin{example}
For $n=17$, we have $B(17)-B(16)=18-15=3=p_{od>1}(16)$. We have the following correspondences case by case.
\begin{align*}
\begin{array}{ccccccccc}
\textbf{CASE I:} & \overline{11}+\overline{5}+\overline{1}\ri 11+5; & \overline{9}+\overline{7}+\overline{1}\ri 9+7; & \\
\textbf{CASE II:} & \overline{17}\ri \overline{16}; & \overline{15}+2\ri \overline{14}+2; & \overline{13}+4\ri \overline{12}+4;\\
&\overline{11}+6\ri \overline{10}+6; & \overline{11}+4+2\ri \overline{10}+4+2 ;& \overline{9}+8\ri \overline{8}+8 ;\\
& \overline{9}+6+2\ri \overline{8}+6+2 ;& 10+\overline{7}\ri 10+\overline{6}; & 8+\overline{7}+2\ri 8+\overline{6}+2;\\
&  \overline{7}+6+4\ri \overline{6}+6+4 ;& 6+\overline{5}+4+2\ri 6+\overline{4}+4+2 ;& \overline{11}+\overline{4}+2\ri \overline{11}+\overline{3}+2;\\
&\overline{13}+\overline{4}\ri \overline{13}+\overline{3}; & \overline{15}+\overline{2}\ri \overline{15}+\overline{1};&\\
\textbf{CASE III-1:} & 8+\overline{5}+4\ri \overline{9}+4+\overline{3}; & & \\
\textbf{CASE III-2:} & \overline{13}+\overline{2}+2\ri 13+3. &  &
\end{array}
\end{align*}
\end{example}

\begin{remark}
For any integers $n\geq 1$ and $m\geq 0$, in fact Theorem \ref{thm:res4} tells us that 
\begin{align*}
B(n)-B(n-1)=p_{od>1}(n-1).
\end{align*}
Hence, we have 
\begin{align*}
B(n)-B(n-2)&=(B(n)-B(n-1))+(B(n-1)-B(n-2))\\
&=p_{od>1}(n-1)+p_{od>1}(n-2)\\
&=p_{od}(n-1).
\end{align*}
Moreover, let $p_{od>1}^e(n)$ (resp. $p_{od>1}^o(n)$) be the number of partitions of $n$ counted by $p_{od>1}(n)$ where the number of parts is even (resp. odd). Then by above bijection $\psi$ we have
\begin{align*}
B'(n)-B'(n-1)&=(B_1(n)-B_1(n-1))-(B_0(n)-B_0(n-1))\\
&=\sum_{m\text{ even}}(B(n, m)-B(n-1, m))-\sum_{m\text{ odd}}(B(n, m)-B(n-1, m))\\
&=\sum_{m\text{ even}}p_{od>1}(n-1, m)-\sum_{m\text{ odd}}p_{od>1}(n-1, m)\\
&=p_{od>1}^e(n-1)-p_{od>1}^o(n-1)=p'_{od>1}(n-1).
\end{align*}
\end{remark}

\section{The Proofs of Theorems \ref{thm:res5} and \ref{thm:res6}}\label{sec:pf_c34}

In this section, we shall give the combinatorial proofs of Corollaries \ref{cor:3} and \ref{cor:4} by proving Theorems \ref{thm:res5} and \ref{thm:res6}. Firstly recall that $\op_d(n)$ is the number of overpartitions of $n$ into distinct parts. Therefore, we have the following proposition.
\begin{proposition}
For any integer $n\geq 0$ we have
\begin{align}
\op_d(n)=\op_{no}(n).
\end{align}
\end{proposition}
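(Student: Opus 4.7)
The plan is to offer two proofs of this proposition, one by generating functions and one by a bijection rooted in Euler's classical theorem. For the analytic proof, I would first note that an overpartition into distinct parts allows each positive integer $k$ to appear in exactly four ways (absent; present once non-overlined; present once overlined; present twice, once overlined and once non-overlined, matching the example $\overline{2}+2$ in the introduction). This gives
$$\sum_{n\geq 0}\op_d(n)q^n=\prod_{k\geq 1}(1+q^k)^2=(-q;q)_\infty^2.$$
On the other hand, in an overpartition counted by $\op_{no}(n)$, the overlined parts are distinct of arbitrary size while the non-overlined parts are odd with repetition allowed, so
$$\sum_{n\geq 0}\op_{no}(n)q^n=\frac{(-q;q)_\infty}{(q;q^2)_\infty}.$$
The classical Euler identity $(-q;q)_\infty=1/(q;q^2)_\infty$ equates the two generating functions and proves the claim.

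For a bijective proof more in line with the spirit of the paper, I would use Euler's bijection between partitions into distinct parts and partitions into odd parts (with repetition allowed). Given an overpartition $\pi$ of $n$ into distinct parts, split $\pi$ into its overlined sub-multiset $O$ and its non-overlined sub-multiset $N$. Since $\pi$ has distinct parts, $N$ is itself a partition into distinct parts; applying Euler's bijection to $N$ produces $N'$, a partition of $|N|$ into odd parts with repetition. The image $\varphi(\pi)$ is the overpartition obtained by keeping $O$ overlined and adjoining the parts of $N'$ as non-overlined parts. Then the overlined parts remain distinct, all non-overlined parts are odd, and the weight is preserved, so $\varphi(\pi)$ is counted by $\op_{no}(n)$.

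The inverse sends an overpartition $\sigma$ counted by $\op_{no}(n)$ to the overpartition obtained by applying Euler's inverse bijection to the multiset of non-overlined parts (which are odd, yielding a partition into distinct parts) while retaining the overlined parts. The only subtlety worth flagging is that in both families an overlined $k$ and a non-overlined $k$ may coexist (the overlined one being interpreted as the first occurrence), so no collision between $O$ and the Euler image needs any special treatment. I do not anticipate any substantive obstacle: both proofs are essentially immediate once the correct generating functions are written down, and the main work in the bijective version is simply to describe the decomposition and verify that the two maps are mutually inverse.
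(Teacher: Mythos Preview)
Your analytic proof is exactly the paper's proof: the paper writes
\[
\sum_{n\geq 0}\op_d(n)q^n=(-q;q)_\infty^2=\frac{(-q;q)_\infty}{(q;q^2)_\infty}=\sum_{n\geq 0}\op_{no}(n)q^n
\]
and cites Euler's theorem for the middle equality, which is precisely what you do.

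Your second, bijective argument is correct and goes beyond what the paper offers. The paper does not give a bijection for this particular proposition (it only uses the generating-function identity), so your decomposition $\pi=(O,N)$ followed by Euler's bijection on the non-overlined component $N$ is a genuine addition. It buys an explicit weight-preserving correspondence rather than a mere equality of generating functions, which is in keeping with the combinatorial spirit of the rest of the paper. Your remark that overlined and non-overlined copies of the same integer may coexist in both families, so no collision arises when reassembling $O$ with the Euler image of $N$, is the only point that needed checking and you have addressed it.
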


\begin{proof}
Using Euler's partition theorem, we have
\begin{align*}
\sum_{n\geq 0}\op_d(n)q^n=(-q; q)^2_{\infty}=\frac{(-q; q)_{\infty}}{(q; q^2)_{\infty}}=\sum_{n\geq 0}\op_{no}(n)q^n.
\end{align*}
\end{proof}

Before proving Theorems \ref{thm:res5} and \ref{thm:res6}, we need some notations and definitions to help us explain these bijections more clearly.

\begin{Def}
For any integer $n\geq 0$ and given overpartition $\la$ counted by $\op_d(n)$, here are four types about its parts.
\begin{itemize}
\item[(i)] We say that the adjacent parts $\la_k$ and $\la_{k+1}$ form a pair of type I if $\la_k$ is overlined and $\la_{k+1}$ is non-overlined and $\la_k=\la_{k+1}$.

\item[(ii)] We say that the adjacent parts $\la_k$ and $\la_{k+1}$ form a pair of type II if $\la_k$ is non-overlined and $\la_{k+1}$ is overlined and $\la_{k}=\la_{k+1}+1$.

\item[(iii)] We say that the part $\la_k$ is a singleton of type I if $\la_k$ is overlined and there is no non-overlined part $\la_{k+1}=\la_k$ or if $\la_{k}$ is non-overlined and there is no overlined part $\la_{k-1}=\la_{k}$.

\item[(iv)] We say that the part $\la_k$ is a singleton of type II if $\la_k$ is overlined and there is no non-overlined part $\la_{k-1}=\la_k+1$ or if $\la$ is non-overlined and there is no overlined part $\la_{k+1}=\la_{k}-1$.
\end{itemize}
For example, given an overpartition $\la=\overline{12}+12+\overline{11}+11+\overline{9}+\overline{8}+8+7+\overline{6}+\overline{3}+3$, then we have
(i). the pairs of type I: $\overline{12}+12, \overline{11}+11, \overline{8}+8, \overline{3}+3$; (ii). the pairs of type II: $12+\overline{11}, 7+\overline{6}$; (iii). the singletons of type I: $\overline{9}, 7, \overline{6}$; and (iv). the singletons of type II: $\overline{12}, 11, \overline{9}, \overline{8}, 8, \overline{3}, 3$.
\end{Def}


Moreover, let $\cC_0(n)$ (resp. $\cC_1(n), \cD_0(n), \cD_1(n)$) be the set of overpartitions of $n$ counted by $C_0(n)$ (resp. $C_1(n), D_0(n), D_1(n)$). Now we can pay attention to the proofs of above two theorems. 

\begin{proof}[Proof of Theorem \ref{thm:res5}]
We divide the proof into two parts.
 
\noindent\textbf{PART I}. We prove the identity \eqref{id:res5-1}. Firstly let $\ocP_d^{o1}(n)$ (resp. $\ocP_d^{o2}(n)$) be the set of overpartitions of $n$ counted by $\op_n^o(n)$ where the largest singleton of type I is overlined (resp. non-overlined). Since the number of parts in any overpartition counted by $\op_d^o(n)$ is odd, then the largest singleton of type I must appear, that means the two sets are well-defined. So there are two facts as follows.
\begin{description}
\item[FACT 1] $|\ocP_d^{o1}(n)|+|\ocP_d^{o2}(n)|=\op_n^o(n)$. 

\item[FACT 2] $|\ocP_d^{o1}(n)|=|\ocP_d^{o2}(n)|$. This is because for any given $\la\in \ocP_d^{o1}(n)$, we can find out a corresponding overpartition $\pi\in \ocP_d^{o2}(n)$ by only changing each overlined (resp. non-overlined) singleton of type I to a non-overlined (resp. an overlined) one and vice versa.
\end{description}

Based on these above two facts, we only need to construct the following bijection to claim that the identity \eqref{id:res5-1} holds true.
\begin{align*}
f_0: \cC_0(n)&\ri \ocP_d^{o1}(n)\\
\pi&\mapsto \la
\end{align*}

On the one hand, for any given $\pi\in \cC_0(n)$, recall that $\og(\pi)$ is the greatest overlined part of $\pi$. We obtain $\la\in \ocP_d^{o1}(n)$ from $\pi$ by the following two steps.
\begin{itemize}
\item[(1).] Divide each non-overlined part $\geq 2\og(\pi)+2$ in $\pi$ equally into a pair of type I.

\item[(2).] The remaining parts keep the same as those in $\pi$.
\end{itemize}
We easily see that such $\la$ is well-defined. Firstly, $\og(\pi)$ is the largest singleton of type I in $\la$, and it is overlined. Secondly the number of parts of $\la$ is odd since the number of parts $<\og(\pi)$ is even by the definition of $\cC_0(n)$ and the number of parts $>\og(\pi)$ is even by above operations.

On the other hand, the inverse map $f_0^{-1}$ can be constructed step by step. For a given $\la\in\ocP_d^{o1}(n)$, firstly find out the largest singleton of type I and denote $\og(\pi)$ by it, then we can obtain $\pi\in \cC_0(n)$ from $\la$ by following two steps.
\begin{itemize}
\item[(1).] Merge each pair of type I $>\og(\pi)$ in $\la$ to a non-overlined even part $\geq 2\og(\pi)+2$.

\item[(2).] The remaining parts keep the same as those in $\la$.
 \end{itemize}
 With the same discussion, we can also easily see that such $\pi$ is well-defined. Furthermore, this map $f_0$ is a bijection step by step.

\noindent\textbf{PART II}. We prove the equation \eqref{id:res5-2}. Let $\ocP_{d1}(n)$ be the set of overpartitions of $n$ counted by $\op_d(n)$ where there is no any singleton of type I (that is, all parts are pairs of type I). Then we claim that $|\ocP_{d1}(n)|=p_{ed}(n)$. In fact, we obtain a partition $\la$ counted by $p_{ed}(n)$ from $\pi\in \ocP_{d1}(n)$ by merging each pair of type I to a non-overlined even part, and conversely obtain $\pi\in \ocP_{d1}(n)$ from $\la$ counted by $p_{ed}(n)$ by dividing each part into a pair of type I. Therefore, we can let $\ocP_d^{e1}(n)$ (resp. $\ocP_d^{e2}(n)$) be the set of overpartitions of $n$ counted by $\op_d^e(n)-p_{ed}(n)=|\ocP_d^e(n)-\ocP_{d1}(n)|$ where the largest singleton of type I is overlined (resp. non-overlined). Then there are two facts as follows.
\begin{align}
|\ocP_d^{e1}(n)|+|\ocP_d^{e2}(n)|=\op_n^e(n)-p_{ed}(n),\quad |\ocP_d^{e1}(n)|=|\ocP_d^{e2}(n)|. 
\end{align}

Hence, the same operations as $f_0$ act on $\cC_1(n)$ and $\cP_d^{e1}(n)$, we can obtain the following bijection to finish the proof. Moreover, the identity \eqref{id:res5-2} holds true.
\begin{align*}
f_1: \cC_1(n)&\ri \ocP_d^{e1}(n)\\
\pi&\mapsto \la.
\end{align*}
\end{proof}

\begin{example}
For a given overpartition $\pi=18+16+\overline{7}+6+\overline{5}+3+\overline{1}\in \cC_0(57)$, we firstly confirm the greatest overlined part $\og(\pi)=\overline{7}$. Then we obtain $f_0(\pi)=\overline{9}+9+\overline{8}+8+\overline{7}+6+\overline{5}+3+\overline{1}\in\ocP_d^{o1}(57)$. For another bijection $f_1$, similar examples can be easily given, nothing that only the parity of the number of parts differ.
\end{example}

\begin{proof}[Proof of Theorem \ref{thm:res6}]
We divide the proof into two parts.

\noindent\textbf{PART I}. We prove the identity \eqref{id:res6-1}. Let $\ocP_{d2}(n)$
be the set of overpartitions of $n$ counted by $\op_d(n)$ where there is no any singleton of type II (that is, all parts are pairs of type II). Then we claim that $|\ocP_{d2}(n)=p_{od>1}(n)|$. Indeed, we can obtain a partition $\la$ counted by $p_{od>1}(n)$ from $\pi\in\ocP_{d2}(n)$ by merging each pair of type II to a non-overlined odd part $>1$, and conversely obtain $\pi\in\ocP_{d2}(n)$ from $\la$ counted by $p_{od>1}(n)$ by dividing each part into a pair of type II. Therefore, let $\ocP_d^{e3}(n)$ (resp. $\ocP_d^{e4}(n)$) be the set of overpartitions of $n$ counted by $\op_d^e(n)-p_{od>1}(n)=|\ocP_{d}^e(n)-\ocP_{d1}(n)|$ where the largest singleton of type II is overlined (resp. non-overlined). Note that there is a fact:
\begin{align*}
|\ocP_d^{e3}(n)|+|\ocP_d^{e4}(n)|=\op_d^e(n)-p_{od>1}(n).
\end{align*}
Hence, we will construct a following bijection $h_{0}$ to prove the identity \eqref{id:res6-1}.
\begin{align*}
h_0: \cD_0(n)\cup\cD_0(n-1)&\ri \ocP_d^{e3}(n)\cup\ocP_d^{e4}(n)\\
\pi&\mapsto \la.
\end{align*}
Moreover, if $\pi\in \cD_0(n)$ then $\la=h_0(\pi)\in \ocP_d^{e3}(n)$ and vice versa, and if $\pi\in \cD_0(n-1)$ then $\la=h_0(\pi)\in \ocP_d^{e4}(n)$ and vice versa. Now for any given $\pi\in\cD_0(n)\cup\cD_0(n-1)$, we will divide it into two cases to describe this map $h_0$.
\begin{description}
\item[CASE I] if $\pi\in\cD_0(n)$ with the greatest overlined part $\og(\pi)$, then we obtain $\la\in \ocP_d^{e3}(n)$ by dividing each non-overlined odd parts $\geq 2\og(\pi)+3$ into a pair of type II.

\item[CASE II] if $\pi\in \cD_0(n-1)$ with the greatest overlined part $\og(\pi)$, then we obtain $\la\in\ocP_{d}^{e4}(n)$ by dividing each non-overlined odd parts $\geq 2\og(\pi)+3$ into a pair of type II and changing the overlined part $\og(\pi)$ to the non-overlined part $\og(\pi)+1$. At this time, we have the greatest non-overlined singleton of type II is $g(\la)=\og(\pi)+1$.
\end{description}
On the other hand, we can construct an inverse map $h_0^{-1}$ step by step.
\begin{description}
\item[CASE I'] if $\la\in \ocP_d^{e3}(n)$, then we can obtain $\pi\in\cD_0(n)$ by merging each pair of type II $>\og(\pi)$ (which is the greatest overlined singleton of type II in $\la$) to a non-overlined odd part $\geq 2\og(\pi)+3$.

\item[CASE II'] if $\la\in \ocP_d^{e4}(n)$ with the greatest non-overlined singleton of type II $g(\la)$, then we obtain $\pi\in \cD_0(n-1)$ by merging each pair of type II to a non-overlined odd part $\geq 2g(\la)+1$ and changing the non-overlined part $g(\la)$ to the overlined part $\og(\pi)=g(\la)-1$.
\end{description}

In all of the above cases, the remaining unspecified parts remain unchanged. Now we can easily check each resulting overpartition is well-defined and the map $h_0$ is exactly a bijection.

\noindent\textbf{PART II}. We prove the identity \eqref{id:res6-2}. Let $\ocP_{d3}(n)$ be the set of overpartitions of $n$ counted by $\op_d(n)$ where only a singletons of type II is $1$ and other parts form pairs of type II. Now we can see that $|\ocP_{d3}(n)|=p_{od>1}(n-1)$ since $p_{od>1}(n-1)$ also enumerates the distinct odd partitions of $n$ where the smallest part is $1$. Therefor, we can let $\ocP_{d}^{o3}(n)$ be the set of overpartitions of $n$ counted by $\op_d^o(n)$ where the greatest singleton of type II is overlined, and $\ocP_{d}^{o4}(n)$ be the set of overpartitions of $n$ counted by $\op_d^o(n)$ where the greatest singleton of type II is non-overlined and it should be more than $1$. Similarly we notice the fact
\begin{align}
|\ocP_{d}^{o3}(n)|+|\ocP_{d}^{o4}(n)|=\op_d^o(n)-p_{od>1}(n-1).
\end{align}
Then by the same ways with above the bijection $h_0$, we can obtain the following bijection $h_1$ to finish this proof.
\begin{align*}
h_1: \cD_1(n)\cup\cD_1(n-1)&\ri \ocP_d^{o3}(n)\cup\ocP_d^{o4}(n)\\
\pi&\mapsto \la.
\end{align*}

Furthermore, this bijection tells us that if $\pi\in \cD_1(n)$ then $\la=h_1(\pi)\in \ocP_d^{o3}(n)$ and vice versa, and if $\pi\in \cD_1(n-1)$ then $\la=h_1(\pi)\in \ocP_d^{o4}(n)$ and vice versa.
\end{proof}

\begin{example}
For a given overpartition $\pi^1=23+15+13+\overline{5}+5+2+\overline{1}\in \cD_0(59)$ with $\og(\pi^1)=\overline{5}$, then we obtain the $\la^1=h_0(\pi^1)=12+\overline{11}+8+\overline{7}+7+\overline{6}+\overline{5}+5+2+\overline{1}\in\ocP_d^{e1}(57)$. For a given overpartition $\pi^2=17+15+\overline{6}+6+\overline{2}\in \cD_1(46)$, we firstly confirm the greatest overlined part $\og(\pi^2)=\overline{6}$. Then we obtain the $\la^2=h_1(\pi^2)=9+\overline{8}+8+\overline{7}+7+6+\overline{2}\in \ocP_d^{o4}(47)$ with $g(\la^2)=7$.
\end{example}

\begin{remark}
In this remark, we shall focus on how to obtain Corollaries \ref{cor:3} and \ref{cor:4} from Theorems \ref{thm:res5} and \ref{thm:res6}, respectively.
\begin{itemize}
\item For Corollary \ref{cor:3}, we have
\begin{align*}
C(n)&=C_0(n)+C_1(n)=\frac{\op_d^o(n)+\op_d^e(n)-p_{ed}(n)}{2}=\frac{\op_d(n)-p_{ed}(n)}{2};\\
C'(n)&=C_0(n)-C_1(n)=\frac{\op_d^o(n)-\op_d^e(n)+p_{ed}(n)}{2}=\frac{p_{ed}(n)-\op'_d(n)}{2}.
\end{align*}

\item For Corollary \ref{cor:4}, we have
\begin{align*}
D(n)+D(n-1)&=(D_0(n)+D_0(n-1))+(D_1(n)+D_1(n-1))\\
&=\op_d^e(n)-p_{od>1}(n)+\op_d^o(n)-p_{od>1}(n-1)\\
&=\op_d(n)-p_{od}(n).
\end{align*}
and 
\begin{align*}
D'(n)+D'(n-1)&=(D_1(n)+D_1(n-1))-(D_0(n)+D_0(n-1))\\
&=\op_d^o(n)-p_{od>1}(n-1)-\op_d^e(n)+p_{od>1}(n)\\
&=p_{od>1}(n)-p_{od>1}(n-1)-\op'_d(n).
\end{align*}
\end{itemize}
\end{remark}

\section{Conclusion}\label{sec:conclusion}

In this paper, we firstly provide two refinements of Corollaries \ref{cor:1} and \ref{cor:2} by the number of parts except for the smallest overlined part, respectively. Then we give the combinatorial proofs of those in Section \ref{sec:pf_c12}. Moreover, for Corollaries \ref{cor:3} and \ref{cor:4} we find out two further theorems to prove them. In fact, we just divide the set of overpartitions with distinct parts into two classes, that is the singletons and pairs of type I, and the singletons and pairs of type II in Section \ref{sec:pf_c34}. This gives us the opportunity to better understand its combinatorial structure, allowing for more meaningful refinements possibly.



\end{document}